\newcommand{\ra}[1]{\renewcommand{\arraystretch}{#1}}
\newcommand{\R}{\mathbb{R}}
\newcommand{\C}{\mathbb{C}}
\newcommand{\id}{\operatorname{Id}}
\newcommand{\li}{\langle}
\newcommand{\ri}{\rangle}
\newcommand{\Ss}{\mathbb{S}}
\newcommand{\SO}{\mathrm{SO}}
\newcommand{\U}{\mathrm U}
\newcommand{\SU}{\mathrm{SU}}
\newcommand{\Sp}{\mathrm{Sp}}
\newcommand{\cycl}[1]{\underset{#1}{\mathfrak{S}}}
\newcommand{\contr}{\,\lrcorner\,}
\newcommand\columntag[2]{#1\def\@currentlabel{#1}.\label{#2}}
\newtheorem{theorem}{Theorem}
\newtheorem{theoremalpha}{Theorem} 
\newtheorem{lemma}[theorem]{Lemma}
\newtheorem{proposition}[theorem]{Proposition}
\numberwithin{equation}{section}
\numberwithin{table}{section}
\numberwithin{theorem}{section}
\theoremstyle{definition}
\newtheorem{definition}[theorem]{Definition}
\newtheorem{remark}[theorem]{Remark}
\title[Hypersurfaces of 6-dim nearly Kähler manifolds]{Hypersurfaces of six-dimensional nearly Kähler manifolds}
\date{}
\author{Mateo Anarella}
\author{Marie D'haene}
\address{ KU Leuven, Department of Mathematics, Celestijnenlaan 200 B – Box 2400, 3001 Leuven, Belgium}
\email{mateo.anarella@kuleuven.be}
\email{marie.dhaene@kuleuven.be}
\thanks{M.\ Anarella is partially supported by FWO and FNRS under EOS project G0I2222N; M.\ D'haene is supported by Methusalem grant METH/21/03–long term structural funding of the Flemish Government.}
\keywords{Nearly Kähler, six-dimensional, constant sectional curvature, hypersurface, almost-contact}
\subjclass[2020]{53C42}
\begin{document} 
\begin{abstract}
    In the context of six-dimensional homogeneous nearly Kähler manifolds, we prove that $\Ss^6$ is the only ambient space admitting constant sectional curvature hypersurfaces.
    In order to do so, we prove first that in $\Ss^3\times\Ss^3$, $\C P^3$ and $F(\C^3)$, any hypersurface with constant sectional curvature is $\eta$-quasi umbilical, where $\eta$ is the dual one-form of the Reeb vector field.
    Then, we use the non-existence of such hypersurfaces in these spaces.
    Additionally, we characterize hypersurfaces of six-dimensional nearly Kähler manifolds which are Sasakian, nearly Sasakian, co-Kähler and nearly cosymplectic.
\end{abstract}

\maketitle
\thispagestyle{empty}

\section{Introduction}
\noindent
Nearly Kähler manifolds were introduced by Tachibana in 1959~\cite{tachibana} and their study was significantly expanded by Gray in subsequent years~\cite{gray3,gray2,gray}.
A nearly Kähler manifold is an almost Hermitian manifold -- a Riemannian manifold $(M,g)$ endowed with a compatible almost complex structure~$J$ -- for which the covariant derivative~$\nabla J$ is skew-symmetric, rather than zero, as is the case for Kähler manifolds.
If, in addition, $\nabla J$ is non-degenerate, $M$ is said to be strict nearly Kähler.

The nearly Kähler condition appears naturally, for example, in the context of the standard round sphere $\Ss^6$ endowed with the almost complex structure induced from the cross product on $\R^7$.
This structure is not Kähler -- $\Ss^6$ can never be Kähler since the second de Rham cohomology is trivial -- but it is nearly Kähler, and moreover, homogeneous.
As such, it is one of the four homogeneous six-dimensional nearly Kähler manifolds ($\Ss^6$, $\Ss^3\times\Ss^3$, $\C P^3$, and $F(\C^3)$), as shown by Butruille~\cite{butruille}.
These manifolds comprise one of the three classes that appear in Nagy's splitting theorem~\cite{nagy1}, which showcases their importance in nearly Kähler geometry.

Nearly Kähler manifolds are abundant in geometry and play an important role in areas seemingly unrelated to nearly Kähler geometry. 
For instance, nearly Kähler structures appear naturally on twistor spaces over quaternionic Kähler manifolds~\cite{ivanov}.
Additionally, Riemannian cones over nearly Kähler manifolds are known to have Riemannian holonomy included in the exceptional Lie group~$G_2$.
In fact, some of the first examples of manifolds with holonomy~$G_2$ were constructed as cones over six-dimensional homogeneous nearly Kähler manifolds~\cite{bryant}.
Moreover, it is noteworthy that six-dimensional strict nearly Kähler manifolds are Einstein.
One fifth of the Einstein constant is known as the type, which we assume to be equal to one in this work.

The structure of these spaces can be better understood by studying their submanifolds, in particular those submanifolds that interact nicely with the structure of the ambient space, e.g.\ the almost complex structure or the isometry group.
When restricting to codimension one in a homogeneous space, a natural class of submanifolds is given by the extrinsically homogeneous hypersurfaces.
These are orbits of a Lie subgroup of the connected component of the identity of the isometry group of the ambient space.
In six-dimensional homogeneous nearly Kähler manifolds, such hypersurfaces were classified~\cite{podesta1}, and are given by the following four families of immersions, parametrized by $t$:
\begin{align}
        \Ss^5\to \Ss^6
        &:p\mapsto \iota_t(p), \label{famS6}\\
        \Ss^2\times\Ss^3\to \Ss^6
        &: (p,q)\mapsto (\cos (t) p,\sin (t) q) \in\R^7,\notag\\
        \Ss^2 \times \Ss^3\to \Ss^3\times\Ss^3
        &: (p,q)\mapsto(\cos (t) \mathbf{1}+\sin (t) p,q), \notag\\
        \Ss^2\times\Ss^3\cong (\Ss^3\times\Ss^3)/\Ss^1\to \C P^3
        &: [p,q]\mapsto [\cos (t) p,\sin (t) q]. \notag
\end{align}
Here, $\iota_t$ for $t \in [0,1)$ denotes a totally umbilical $\Ss^5$ in $\Ss^6$ at height $t$.
The associated Lie subgroups are, respectively, $\SU(3)\subset G_2$, $\SO(4)\subset G_2$, $\SU(2)\times\SU(2) \subset \SU(2)\times\SU(2)\times\SU(2)$, and $\Sp(1)\times\Sp(1) \subset \Sp(2)$. 
In addition, all these immersions are Hopf hypersurfaces~\cite{bolton,hu,michael}.

One of the main results of the current work concerns the immersion~\eqref{famS6}, which is shown to be the only possible way to immerse a space form into a homogeneous six-dimensional nearly Kähler manifold.
\begin{theoremalpha}
    \label{maintheorem}
    Let $M$ be a hypersurface with constant sectional curvature of a six-dimensional homogeneous strict nearly Kähler manifold. 
    Then $M$ is a hypersurface of $\Ss^6$ with sectional curvature greater than or equal to one.
    If the curvature is greater than one, then $M$ is locally a totally umbilical $\Ss^5$.
    If the curvature is equal to one and $M$ is complete, then $M$ is a totally geodesic $\Ss^5$.
\end{theoremalpha}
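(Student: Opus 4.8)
We go through the six-dimensional homogeneous strict nearly Kähler manifolds one at a time: by Butruille's classification~\cite{butruille} these are $\Ss^6$, $\Ss^3\times\Ss^3$, $\C P^3$ and $F(\C^3)$, all of type one. Let $M$ be a hypersurface with constant sectional curvature $c$ in one of them, let $N$ be a local unit normal, $A$ the shape operator and $h(X,Y)=g(AX,Y)$ the second fundamental form. With suitable sign conventions the Gauss equation reads
\[
c\bigl(g(Y,Z)g(X,W)-g(X,Z)g(Y,W)\bigr)=\bar R(X,Y,Z,W)+h(Y,Z)h(X,W)-h(X,Z)h(Y,W),
\]
where $\bar R$ is the ambient curvature tensor. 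The plan is to exploit this, together with the Codazzi equation and the nearly Kähler structure equations: for $\Ss^6$ it already gives all the quantitative statements, whereas for the three remaining spaces it forces a rigidity that cannot occur there, so those spaces get excluded.

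Assume first the ambient space is $\Ss^6$, so $\bar R(X,Y,Z,W)=g(Y,Z)g(X,W)-g(X,Z)g(Y,W)$ and the Gauss equation becomes $h(Y,Z)h(X,W)-h(X,Z)h(Y,W)=(c-1)\bigl(g(Y,Z)g(X,W)-g(X,Z)g(Y,W)\bigr)$. Evaluating on an orthonormal basis of eigenvectors of $A$ with eigenvalues $\lambda_1,\dots,\lambda_5$ yields $\lambda_i\lambda_j=c-1$ for all $i\neq j$; since $M$ is five-dimensional this forces either $c=1$ and $\operatorname{rank}A\le1$ everywhere, or $c>1$ and $A=\pm\sqrt{c-1}\,\id$. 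In particular $c\ge1$. If $c>1$ then $M$ is totally umbilical in $\Ss^6$, hence locally a geodesic hypersphere $\Ss^5$. If $c=1$ and $M$ is complete, then $A$ has rank at most one everywhere, and a standard rigidity argument---the relative nullity foliation has complete totally geodesic leaves of dimension at least four in $\Ss^6$, which together with the periodicity of the geodesics of $\Ss^5$ forces $A\equiv0$---shows that $M$ is a totally geodesic $\Ss^5$. This settles the $\Ss^6$ case, and with it all the quantitative assertions.

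It remains to rule out the ambient spaces $\Ss^3\times\Ss^3$, $\C P^3$ and $F(\C^3)$. On a hypersurface $M$ of any of these, the ambient almost complex structure $J$ induces an almost contact metric structure $(\phi,\xi,\eta,g)$ with Reeb field $\xi=-JN$, $\eta=g(\xi,\cdot)$, and $JX=\phi X+\eta(X)N$ for $X$ tangent to $M$. Inserting this splitting of $J$, and the induced splitting of $\nabla J$ along $M$, into the (explicitly known, homogeneous) ambient curvature tensor and using Gray's nearly Kähler curvature identities and the Codazzi equation, the Gauss equation---whose left-hand side is now the tensor of constant sectional curvature $c$---turns into a system of algebraic relations among $A$, $\phi$, $\xi$ and $\eta$. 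A case analysis on the multiplicities of the principal curvatures of $M$, with the Hopf case $A\xi=\mu\xi$ treated separately, then collapses these relations to $AX=\alpha X+\beta\,\eta(X)\xi$ for smooth functions $\alpha,\beta$; in other words, $M$ is $\eta$-quasi-umbilical. However, no $\eta$-quasi-umbilical hypersurface of constant sectional curvature exists in any of $\Ss^3\times\Ss^3$, $\C P^3$ or $F(\C^3)$: once $A$ has this form the Gauss equation forces $\bar R$ to be an explicit tensor assembled from $g$, $\eta$ and the two functions $\alpha,\beta$ only, and one checks that no such tensor coincides with the curvature of those three homogeneous spaces (equivalently, one invokes the non-existence of $\eta$-quasi-umbilical hypersurfaces established earlier). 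This contradiction shows that a constant sectional curvature hypersurface of a six-dimensional homogeneous strict nearly Kähler manifold can occur only inside $\Ss^6$, and combined with the previous paragraph this is precisely the statement of the theorem.

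The computational heart---and the main obstacle---is the argument for the three non-sphere spaces: one has to deal with three different explicit curvature tensors, keep careful track of the tangential and normal components of $J$ and of $\nabla J$ along $M$, and push the case analysis on the principal curvatures far enough to force the $\eta$-quasi-umbilical form, in particular ruling out three or more distinct principal curvatures and handling the situation where $\xi$ is not a principal direction. By contrast, the $\Ss^6$ case is classical submanifold theory once the Gauss equation is in hand.
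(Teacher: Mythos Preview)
Your $\Ss^6$ paragraph is fine and matches the paper's argument essentially verbatim: Gauss gives $\lambda_i\lambda_j=c-1$, hence $c\ge1$, totally umbilical if $c>1$, and a rigidity result handles the complete $c=1$ case.

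For the three non-sphere spaces, though, what you have written is a strategy outline, not a proof, and it misses the paper's main device. You propose to feed the full Gauss equation (quadratic in $h$) together with Codazzi into a ``case analysis on the multiplicities of the principal curvatures'', but you never carry this out, and there is no reason to expect it to terminate cleanly: the ambient curvature in these spaces involves the extra tensors $P$ (on $\Ss^3\times\Ss^3$), $J_o$ (on $\C P^3$), and $J_1,J_2,J_3$ (on $F(\C^3)$), which do \emph{not} reduce to the almost contact data $(\phi,\xi,\eta)$ alone, so your claim that the relations are ``among $A,\phi,\xi$ and $\eta$'' is already inaccurate.

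The paper avoids all of this with one observation. Taking the cyclic sum over $X,Y,W$ of the Gauss equation and using that $M$ has constant curvature kills the quadratic $h\cdot h$ terms and leaves
\[
\cycl{X,Y,W}\, g\bigl(\tilde R(X,Y)Z,\,SW\bigr)=0,
\]
an identity \emph{linear} in $S$ (sometimes called the Tsinghua principle). One then builds, in each ambient space separately, an explicit orthogonal frame adapted to the extra structure---using $PN$ in $\Ss^3\times\Ss^3$, the twistor splitting $\mathcal D_1\oplus\mathcal D_2$ in $\C P^3$, the triple splitting $\mathcal D_1\oplus\mathcal D_2\oplus\mathcal D_3$ in $F(\C^3)$, with subcases according to how $N$ sits relative to these distributions---and evaluates the linear identity on frame vectors. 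The resulting finite linear system in the $h_{ij}$ forces $S=\alpha\,\id+\beta\,\eta\otimes\xi$ directly, with no multiplicity cases and no Codazzi. The non-existence of $\eta$-quasi-umbilical hypersurfaces in these three spaces is then quoted from the literature, exactly as you suggest at the end.

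So the gap is concrete: you have no mechanism that actually delivers $\eta$-quasi-umbilicality, and the one you gesture at (quadratic Gauss plus principal-curvature cases) is both harder and unsubstantiated. The missing idea is the cyclic-sum linearization.
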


Almost contact (metric) structures are the odd-dimensional analogue of almost Hermitian structures. 
As such, said structure consists of an odd-dimensional Riemannian manifold $(M,g)$, a unit vector field~$\xi$ (known as the Reeb vector field), the dual one-form $\eta=\xi\contr g$ and a tensor $\varphi$ that satisfies certain compatibility properties (see Equation~\eqref{compatibilitypropalmostcontac}).

A hypersurface of an almost Hermitian manifold $(\tilde{M},g,J)$ with unit normal vector field $N$
carries a natural almost contact structure given by $\xi=-JN$, and $\varphi=J-\eta\otimes N$.
If, in addition, the ambient space is nearly Kähler, 
by plugging in the tensor $\varphi$ in the nearly Kähler condition, we obtain two equations, Equations~\eqref{blairsequations}, that involve $\varphi$ and the second fundamental form.
From these, we deduce that a natural class of hypersurfaces consists of those for which the Reeb vector field $\xi$ is a Killing vector field.
In general, such almost contact manifolds are called almost $K$-contact.
In a nearly Kähler ambient space, almost $K$-contact hypersurfaces are equivalent to hypersurfaces that satisfy
$S\varphi=\varphi S$, where $S$ is the shape operator associated to a unit normal~\cite{blair}.

It has been shown that the homogeneous nearly Kähler $\C P^3$ and $F(\C^3)$ do not admit hypersurfaces that satisfy $S\varphi = \varphi S$~\cite{loubeau}.
In contrast, such hypersurfaces do exist in $\Ss^3 \times \Ss^3$, and have been classified~\cite{hu}.
Finally, in $\Ss^6$, it has been shown that almost $K$-contact hypersurfaces -- hence, satisfying the commutation property -- are the totally umbilical hyperspheres~\cite{kenedy}.
A consequence of these classification results in the six-dimensional homogeneous nearly Kähler spaces, is that any $\eta$-quasi-umbilical hypersurface in such an ambient space must be congruent to the image of a totally umbilical embedding of $\Ss^5$ in $\Ss^6$.

In what follows, we list some types of almost contact manifolds with a Killing Reeb vector field.
\begin{enumerate}[(a)]
    \item 
    A first example is given by Sasakian manifolds: almost contact manifolds $(M,g,\xi,\varphi)$ that satisfy $(\nabla_X\varphi)Y=g(X,Y)\xi-\eta(Y)X$.
    A classical example of a Sasakian manifold is the odd-dimensional sphere $\Ss^{2n+1}(1)$ with the almost contact structure induced from the immersion into $\mathbb{C}^{n+1}$.
    Equivalently, Sasakian manifolds are those manifolds whose Riemannian cone is Kähler, so we can view Sasakian manifolds as odd-dimensional counterparts of Kähler manifolds.

    \item 
    Another possible analogue of Kähler manifolds are the so-called co-Kähler manifolds, those almost contact manifolds $(M,g,\xi,\varphi)$ that satisfy $\nabla\varphi=0$. 
    Originally, they were introduced as cosymplectic manifolds~\cite{bookblair},  but later on the term was re-coined as co-Kähler~\cite{li}.
    Equivalently, a manifold $M$ is co-Kähler if $M\times\R$ is Kähler, hence, a classical example of a co-Kähler manifold is $\R^{2n+1}$.

    \item
    We can define odd-dimensional counterparts of nearly Kähler manifolds, which, in the direction of Sasakian manifolds, are called nearly Sasakian manifolds.
    Such a manifold is an almost contact manifold $(M,g,\xi,\varphi)$  that satisfies 
    $(\nabla_X\varphi)Y+(\nabla_Y\varphi)X=2g(X,Y)\xi-\eta(Y)X-\eta(X)Y$.
    It has been shown that nearly Sasakian hypersurfaces are $\eta$-quasi-umbilical~\cite{blair}.
    It is also noteworthy that the concept of nearly Sasakian manifolds is only of interest in dimensions less than or equal to five, since it has been shown that for other dimensions, such manifolds are automatically Sasakian~\cite{dileo}.

    \item
    In a similar fashion, another analogue of nearly Kähler manifolds is provided by nearly cosymplectic manifolds.
    Such a manifold $(M,g,\xi,\varphi)$ is characterized by $\nabla\varphi$ being skew-symmetric.
\end{enumerate}


On a hypersurface of a given nearly Kähler manifold $\tilde M$ of constant type equal to one, we can define three distinguished almost contact structures: $(\xi=-JN,\phi_i)$ where 
$\phi_1=\varphi$, $\phi_2=\xi \,\lrcorner\,G$, $\phi_3=\phi_2\phi_1=N\,\lrcorner\, G=\nabla\xi-\phi_1S$ and $G=\tilde\nabla J$.
For the totally geodesic $\Ss^5$ in $\Ss^6$, the almost contact structures $\phi_1$ and $\phi_2$ are nearly cosymplectic and $\phi_3$ is Sasakian~\cite{dileo2}.

Recently, a new approach has been proposed for the classification --~formerly presented in~\cite{chinea}~-- of almost contact metric manifolds, in terms of the $(1,1)$-tensor $\tfrac{1}{2}\mathcal{L}_\xi\varphi$~\cite{ilkagiulia}.
In our context, this tensor is often related to the shape operator of the immersion, depending on the class of almost contact manifold.

In the current article, we continue the study of hypersurfaces in the four six-dimensional homogeneous nearly Kähler manifolds. 
In particular, given a hypersurface $M$, any almost contact structure with Reeb vector field $\xi=-JN$ is of the form 
$\phi=a \phi_1+b\phi_2+c\phi_3$ 
with $a^2+b^2+c^2=1$. 
In the following theorem we explore different properties that $(M,g,\xi,\phi)$ may have.
\begin{theoremalpha}
    \label{theoalmostcontact}
    Let $M$ be a hypersurface of a six-dimensional strict nearly Kähler manifold $(\tilde{M},g,J)$ of constant type equal to one,  with unit normal vector field $N$, shape operator $S$ and almost contact structure $(\xi,\phi)$, where $\xi=-JN$ and $\phi = a \phi_1 + b \phi_2 + c \phi_3$ with $a^2 + b^2 + c^2 = 1$.
    Then the following statements hold.
    \begin{enumerate}
        \item $(M,g,\xi,\phi)$ is Sasakian if and only if $S=0$ and $\phi=\phi_3=N\contr G$. \label{theosas}
        \item $(M,g,\xi,\phi)$ is never co-Kähler. \label{theocoK}
        \item If $(M,g,\xi,\phi)$ is nearly Sasakian, then $M$ is $\eta$-quasi-umbilical and belongs to one of the following families: \label{theonS}
        \begin{enumerate}
            \item $S=\id+(\lambda-1)\eta \otimes \xi$, $\lambda\in\R$ and $\phi=\phi_1=J-\eta \otimes N$,  \label{blairscase}
            \item $S=\lambda \id$, $\lambda\in\R$ and $\phi=\phi_3=N\contr G$, \label{casetotumb}
            \item $S=\sec (t)\id$ and $\phi=\cos t \phi_1+\sin t \phi_2$, \label{secantcase}
            \item $S=\frac{1-c}{a}\id+\frac1c \xi(b)\eta\otimes\xi$, where $a,c\ne 0$, $a$ is constant, and  $X(b)=X(c)=0$ for any $X \in \ker\eta$. \label{nSnonconstantcase}
        \end{enumerate}
        \item If  $(M,g,\xi,\phi)$ is nearly cosymplectic, then $M$ is $\eta$-quasi-umbilical and belongs to one of the following families: \label{theonCos}
        \begin{enumerate}
            \item $S=\lambda \id$, $\lambda\in\R$ and $\phi=\phi_2$, \label{caseumbncos}
            \item $S=\lambda \eta\otimes \xi$, $\lambda\in\R$ and $\phi=\phi_1$, \label{casequasncos}
            \item $S=0$ and $\phi=\cos t \phi_1+\sin t \phi_2$, \label{casetotgeoncos}
            \item $S=-\frac{c}{a}\id+\frac1c\xi(b)\eta\otimes\xi$, where $a,c\ne 0$, $a$ is constant, and $X(b)=X(c)=0$ for any $X \in \ker\eta$. \label{casefreencos}
        \end{enumerate}
    \end{enumerate}
\end{theoremalpha}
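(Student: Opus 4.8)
The plan is to reduce every assertion to the computation of the covariant derivative $\nabla\phi$ of the almost contact structure, and then to read off consequences. Writing $\phi=a\phi_1+b\phi_2+c\phi_3$ with $a,b,c\colon M\to\R$, the starting point is
\[
(\nabla_X\phi)Y=X(a)\,\phi_1Y+X(b)\,\phi_2Y+X(c)\,\phi_3Y+a\,(\nabla_X\phi_1)Y+b\,(\nabla_X\phi_2)Y+c\,(\nabla_X\phi_3)Y ,
\]
so the first task is to record the structure equations for $\nabla\phi_1,\nabla\phi_2,\nabla\phi_3$ in terms of $g$, $\xi$, $\eta$, $S$ and the $\phi_i$. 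These follow from the Gauss and Weingarten formulas, from Equations~\eqref{blairsequations} and more generally from the nearly Kähler identity $G(X,Y)+G(Y,X)=0$ together with its first prolongation (the second-order identity on a six-dimensional nearly Kähler manifold of constant type one), and from the tangential/normal splitting of $G$ along the hypersurface; one repeatedly uses $\nabla_X\xi=\phi_1SX+\phi_3X$, the compatibility identities $\phi_i^{\,2}=-\id+\eta\otimes\xi$, the quaternion-type relations among the $\phi_i$ on $\ker\eta$ (in particular $\phi_3=\phi_2\phi_1$), and the normalisation of $\phi_2,\phi_3$ forced by $|G(X,Y)|^2=|X|^2|Y|^2-g(X,Y)^2-g(JX,Y)^2$. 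With these in hand each of the four conditions becomes one tensor identity, and the work consists in decomposing it according to values ($\R\xi$ versus $\ker\eta$), arguments ($\xi$ versus $\ker\eta$), symmetry type, and the shape-operator and $da,db,dc$ contributions, and isolating what each part forces; the $(1,1)$-tensor $\tfrac12\mathcal L_\xi\phi$, an explicit combination of $S$ and the $\phi_i$ here, serves as a convenient intermediary.

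For \eqref{theosas} and \eqref{theocoK} one exploits the rigidity of first-order conditions and never needs the full structure equations. Since $\phi\xi=0$, $(\nabla_X\phi)\xi=-\phi\,\nabla_X\xi=-\phi(\phi_1SX+\phi_3X)$. If $M$ is co-Kähler then $\nabla\phi=0$, which --- differentiating $\phi^{\,2}=-\id+\eta\otimes\xi$ and contracting with $\xi$ --- forces $\nabla\xi=0$, that is $\phi_1SX+\phi_3X=0$ for all $X$; restricting to $X\in\ker\eta$ and applying $\phi_1$ makes the symmetric operator induced by $S$ on $\ker\eta$ equal to the skew, invertible operator induced by $\phi_2$, which is impossible, proving \eqref{theocoK}. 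If $M$ is Sasakian, evaluating the defining equation at $Y=\xi$ and simplifying with $\phi^{\,2}=-\id+\eta\otimes\xi$ gives $\nabla_X\xi=-\phi X$; comparing with $\nabla_X\xi=\phi_1SX+\phi_3X$ and separating symmetric from skew parts on $\ker\eta$ forces $S$ to vanish on $\ker\eta$ and $S\xi\in\R\xi$, i.e.\ $S=\mu\,\eta\otimes\xi$ for a function $\mu$, together with $\phi=\pm\phi_3$; feeding this back into the full identity forces $\mu=0$ and fixes the sign, so $S=0$ and $\phi=\phi_3=N\contr G$. The converse is the direct verification, from the structure equations with $S=0$, that $(\nabla_X\phi_3)Y=g(X,Y)\xi-\eta(Y)X$ holds, which recovers the Sasakian structure on the totally geodesic $\Ss^5\subset\Ss^6$ noted above.

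For \eqref{theonS} and \eqref{theonCos} one works with the symmetrised tensor $(\nabla_X\phi)Y+(\nabla_Y\phi)X$ in two stages. First, $M$ is $\eta$-quasi-umbilical: both conditions force the Reeb field $\xi$ to be Killing, hence $\nabla\xi$ skew, hence (via $\nabla_X\xi=\phi_1SX+\phi_3X$) $S\phi_1=\phi_1S$; a further analysis of the defining equation --- as in~\cite{blair} in the nearly Sasakian case, and analogously in the nearly cosymplectic one --- promotes this to the commutation of $S$ with all of $\phi_1,\phi_2,\phi_3$ on $\ker\eta$ and to $S\xi\in\R\xi$, and since the only symmetric endomorphisms of the four-dimensional distribution $\ker\eta$ commuting with all three $\phi_i$ are the real multiples of the identity, this says $S=\alpha\,\id+\beta\,\eta\otimes\xi$. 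Second, substituting this shape operator into the full symmetrised identity and decomposing once more: the $\ker\eta\times\ker\eta$-part gives algebraic relations between $a,b,c$ and $\alpha$ (using the constant-type normalisation), which bifurcate according to whether $a=0$ or $a\neq0$ and whether $c=0$ or $c\neq0$; the terms carrying $da,db,dc$ force, in the generic branch, that $a$ is constant and $X(b)=X(c)=0$ for $X\in\ker\eta$; and the residual $\xi$-components relate $\beta$ to $\xi(b)$ (respectively $\xi(c)$) and pin down $\alpha$. Collecting the surviving cases produces exactly the families \eqref{blairscase}--\eqref{nSnonconstantcase} for nearly Sasakian and \eqref{caseumbncos}--\eqref{casefreencos} for nearly cosymplectic, the difference between the two lists reflecting the inhomogeneous term $2g(X,Y)\xi-\eta(Y)X-\eta(X)Y$ present in the nearly Sasakian condition and absent in the nearly cosymplectic one; the sub-case $\phi=\phi_1$ of \eqref{theonS} recovers \eqref{blairscase}.

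The main obstacle is the bookkeeping. Deriving correct structure equations for $\nabla\phi_2$ and $\nabla\phi_3$ requires the tangential/normal splitting of both $G(X,Y)$ and $(\tilde\nabla_XG)(Y,Z)$ on $M$, and one must carry all the $\eta\otimes\xi$-corrections faithfully through the quaternion-type identities among $\phi_1,\phi_2,\phi_3$, since a single sign slip propagates through the whole case analysis. Once the symmetrised identity is written out, the decomposition into truly independent components --- and the precise role of the constant type --- is what manufactures the case distinctions; the non-constant-coefficient families \eqref{nSnonconstantcase} and \eqref{casefreencos} are the delicate endpoint, as there one must determine exactly which directional derivatives of $a,b,c$ are permitted to survive and show that all the others vanish.
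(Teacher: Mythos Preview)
Your outline is correct and follows essentially the same route as the paper: compute the covariant derivatives $\nabla\phi_i$ explicitly (the paper records them as \eqref{covariant derivatives}), use that all four structures are almost $K$-contact to obtain $S\phi_1=\phi_1S$ and hence a block-diagonal shape operator with $h_{11}=h_{22}$, $h_{33}=h_{44}$ in the frame $\{\zeta,\phi_1\zeta,\phi_2\zeta,\phi_3\zeta,\xi\}$, and then read off the remaining constraints from the defining identities.

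There are two presentational differences worth noting. First, for parts \eqref{theosas} and \eqref{theocoK} your argument via $\nabla_X\xi=\phi_1SX+\phi_3X$ and the symmetric/skew decomposition on $\ker\eta$ is more direct than the paper's, which instead evaluates the defining equation on specific frame triples $(X,Y,Z)$; both reach the same conclusions (your co-K\"ahler argument, reducing to $S=\phi_2$ on $\ker\eta$, is particularly clean). Watch the sign in the Sasakian case: with the paper's convention one gets $\nabla_X\xi=+\phi X$, which pins down $c=1$ (not $c=-1$) immediately. Second, for parts \eqref{theonS} and \eqref{theonCos} you front-load the $\eta$-quasi-umbilicality by asserting that the defining equation forces $S$ to commute with all three $\phi_i$; the paper does not isolate this as a separate step but obtains $h_{11}=h_{33}$ branch by branch while simultaneously extracting the relations on $a,b,c$ and their derivatives. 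Your organisation is legitimate, but the phrase ``as in \cite{blair}'' is doing real work here: Blair's argument is for $\phi=\phi_1$, and you should indicate concretely which component of the symmetrised identity (for general $\phi=a\phi_1+b\phi_2+c\phi_3$) yields $h_{11}=h_{33}$ before you substitute $S=\alpha\,\id+\beta\,\eta\otimes\xi$.
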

In a recent unpublished article~\cite{juanma-alberto}, it was shown that the only six-dimensional strict nearly Kähler manifold that admits a totally geodesic hypersurface is the round sphere $\Ss^6$. 
Hence, the only hypersurface admitting a Sasakian structure $(\xi=-JN, \phi)$ is a totally geodesic $\Ss^5\subset\Ss^6$.

\subsection*{Acknowledgments}
The authors would like to thank Dr.\ Kristof Dekimpe for his contributions to this work and Prof.\ Luc Vrancken, Prof.\ Ines Kath and Prof.\ Joeri Van der Veken for their helpful suggestions and guidance.

\section{Preliminaries}

\subsection{Isometrically immersed hypersurfaces}
Let $M$ be an isometrically immersed hypersurface of a Riemannian manifold $(\tilde{M},g)$.
Let $(\nabla,R)$ and $(\tilde{\nabla},\tilde R)$ be their Levi-Civita connection and associated Riemann curvature tensor, respectively, and denote by $N$ a local unit normal vector field.

The connection of the ambient space $\tilde M$ splits in normal and tangent parts with respect to the hypersurface $M$, as the so-called Gauss and Weingarten formulas indicate:
\[
\tilde\nabla_XY=\nabla_XY+h(X,Y)N, \qquad \tilde{\nabla}_XN=-SX,
\]
where $h$ is the second fundamental form and $S$ is the shape operator associated to $N$.
These tensors are related by $h(X,Y)=g\left( SX,Y \right)$ and are both symmetric.
Like the Levi-Civita connection, the curvature $\tilde R$ of the ambient space splits in a normal and tangent part with respect to the hypersurface.
Said tangent part is encoded in the Gauss equation, which involves the curvature $R$ of the hypersurface and the second fundamental form $h$:
\begin{equation}
\label{eq:gauss-equation}
    \begin{aligned}
     g( \tilde{R}(X,Y)Z,W ) &= g( R(X,Y)Z,W ) - h(X,W)h(Y,Z) + h(X,Z)h(Y,W).
    \end{aligned}   
\end{equation}
By taking a cyclic sum of the Gauss equation~\eqref{eq:gauss-equation} over $X$, $Y$ and $W$, we obtain
\begin{equation*}
    \cycl{X,Y,W}\ g(\tilde{R}(X,Y)Z,SW )=\cycl{X,Y,W}\ g( R(X,Y)Z,SW ).
\end{equation*}
If the submanifold $M$ has constant sectional curvature, this equation becomes
\begin{equation}
\label{lucequation}
    \cycl{X,Y,W}\ g( \tilde{R}(X,Y)Z,SW ) = 0.
\end{equation}
It is noteworthy that this formula contains no quadratic terms that involve the second fundamental form, which makes it particularly useful.
A generalization of this formula to higher codimension is sometimes referred to as the \emph{Tsinghua Principle} in the literature.

\begin{definition}
    Let $M$ be an isometrically immersed hypersurface in $(\tilde M,g)$ with second fundamental form $h$.
    We say that $M$ is 
    \begin{enumerate}[(1)]
        \item $u$-quasi-umbilical if $h = \alpha g + \beta u \otimes u$ for some $\alpha, \beta \in C^\infty(M)$ and $u \in \Gamma(T^*M)$,
        \item totally umbilical if $h = \alpha g$ for some $\alpha \in C^\infty(M)$,
        \item totally geodesic if $h = 0$.
    \end{enumerate}
\end{definition}

\subsection{Nearly Kähler manifolds}
A {nearly Kähler} manifold is an almost Hermitian manifold $(\tilde M,g,J)$ for which the covariant derivative of $J$, which we will denote by~$G$, is skew-symmetric.
We say the $\tilde M$ is {strict nearly Kähler} if, in addition,~$G$ is non-degenerate.
The properties of the tensor~$G$ are similar to those of a cross product, as is illustrated by the following proposition.
\begin{proposition}
    \label{prop:G-properties}
    Let $(\tilde M,g,J)$ be a nearly Kähler manifold.
    Then the tensor $G = \tilde\nabla J$ satisfies the following properties:
    \begin{enumerate}
        \item $G(X,JY)+JG(X,Y)=0$,
        \item $g(G(X,Y),Z)+g(G(X,Z),Y)=0$,
        \item $g(G(X,Y),JZ)+g(G(X,Z),JY)=0$.
    \end{enumerate}
\end{proposition}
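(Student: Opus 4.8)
The plan is to verify each of the three identities directly from the nearly Kähler condition, namely that $G = \tilde\nabla J$ is skew-symmetric, i.e. $g(G(X,Y),Z) = -g(G(Y,X),Z)$ for all vector fields, together with the standard compatibility facts: $J$ is orthogonal ($g(JX,JY)=g(X,Y)$), $J^2 = -\id$, and $\tilde\nabla$ is metric. The first identity, $G(X,JY) + JG(X,Y) = 0$, should follow by differentiating the relation $J^2 = -\id$: applying $\tilde\nabla_X$ to $J(JY) = -Y$ gives $(\tilde\nabla_X J)(JY) + J(\tilde\nabla_X J)Y = 0$ after the connection terms on $Y$ cancel, which is exactly $G(X,JY) + JG(X,Y) = 0$; here one must be a little careful that $\tilde\nabla_X(JY)$ decomposes as $(\tilde\nabla_X J)Y + J\tilde\nabla_X Y$, so the $J\tilde\nabla_X Y$ pieces match up and drop out.

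For the second identity, $g(G(X,Y),Z) + g(G(X,Z),Y) = 0$, the idea is to differentiate the compatibility of $J$ with $g$. Since $J$ is skew-symmetric with respect to $g$ (equivalently $g(JY,Z) = -g(Y,JZ)$), differentiate the identity $g(JY,Z) + g(Y,JZ) = 0$ in the direction $X$ using that $\tilde\nabla$ is metric: the $\tilde\nabla_X Y$, $\tilde\nabla_X Z$ terms cancel against each other by skew-symmetry of $J$, leaving $g((\tilde\nabla_X J)Y,Z) + g(Y,(\tilde\nabla_X J)Z) = 0$, that is $g(G(X,Y),Z) + g(G(X,Z),Y) = 0$. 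So far neither of these two has used skew-symmetry of $G$ in its first two slots — they hold for any almost Hermitian manifold.

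The third identity, $g(G(X,Y),JZ) + g(G(X,Z),JY) = 0$, I would obtain by combining the first two. Starting from property (2) applied with $Z$ replaced by $JZ$: $g(G(X,Y),JZ) + g(G(X,JZ),Y) = 0$. Now use property (1) to rewrite $G(X,JZ) = -JG(X,Z)$, so the second term becomes $-g(JG(X,Z),Y) = g(G(X,Z),JY)$ by skew-symmetry of $J$ with respect to $g$. Substituting back yields $g(G(X,Y),JZ) + g(G(X,Z),JY) = 0$, as desired.

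I do not expect a genuine obstacle here — all three are formal consequences of metric compatibility and the algebraic relations defining an almost Hermitian structure, with the nearly Kähler hypothesis not even strictly needed for these particular statements (though it guarantees $G$ is a well-defined skew tensor and is the natural setting). The only point requiring minor care is bookkeeping in the Leibniz expansions of $\tilde\nabla_X(JY)$ so that the connection terms acting on $Y$ (rather than on $J$) genuinely cancel; once that is handled, each identity is one or two lines.
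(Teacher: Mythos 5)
Your proof is correct: all three identities follow, exactly as you argue, from differentiating $J^2=-\id$ and the skew-adjointness of $J$ using that $\tilde\nabla$ is metric, and then combining (1) and (2) to get (3); your observation that the nearly Kähler condition is not actually needed for these identities is also accurate. The paper states this proposition without proof as a standard fact, and your argument is the standard one, so there is nothing to reconcile.
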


Six-dimensional nearly Kähler manifolds are Einstein.
That is, the Ricci tensor is a constant multiple of the metric.
If we denote the so-called Einstein constant by $5\alpha$, then the following identities hold in any six-dimensional nearly Kähler manifold:
\begin{align}
        ||G(X,Y)||^2
        &=\alpha \left(g(X,X)g(Y,Y)-g(X,Y)^2-g(JX,Y)^2\right), \label{eq:norm-G-squared}\\
        g(G(X,Y),G(Z,W))
        &=\alpha (g(X,Z)g(Y,W)-g(X,W)g(Y,Z) \label{constanttype}\\
        &\qquad+g(JX,Z)g(Y,JW)-g(JX,W)g(Y,JZ)), \notag\\
        (\tilde\nabla G)(X,Y,Z) 
        &= \alpha\left(g(Y, JZ)X + g(X,Z)JY - g(X, Y )JZ\right),\label{eq:nablaG}\\
        G(G(X, Y ),Z)
        &= \alpha(g(X,Z)Y - g(Y,Z)X - g(JX,Z)JY + g(JY,Z)JX). \label{eq:propg5}
\end{align}
For a nearly Kähler manifold of any dimension satisfying \eqref{constanttype}, the function $\alpha$ is known as the type of $M$, and it is always a positive constant for six-dimensional strict nearly Kähler manifolds~\cite{ilka}.
\begin{theorem}[Butruille, \cite{butruille}]
\label{thm:butruille}
    A simply connected, complete, homogeneous six-dimensional strict nearly Kähler manifold is, up to homotheties, isometric to $\Ss^6$, $\Ss^3\times\Ss^3$, $F(\C^3)$ or $\C P^3$.
\end{theorem}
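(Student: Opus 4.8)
The plan is to deduce the classification from the Wolf--Gray classification of Riemannian $3$-symmetric spaces.

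\emph{Step 1: reduction to a compact homogeneous space.} Since $\tilde M$ is six-dimensional and strict nearly Kähler it is Einstein with Einstein constant $5\alpha$ for a positive constant $\alpha$; this is the content of \eqref{constanttype} together with \cite{ilka}. A complete Riemannian manifold with positive Ricci curvature is compact by Bonnet--Myers, so $\tilde M$ is compact, and it is simply connected by hypothesis. Write $\tilde M=G/H$ with $G=\isoo(\tilde M)$ a compact Lie group and $H$ the (compact) isotropy subgroup at a base point $o$, and fix an $\mathrm{Ad}(H)$-invariant complement $\mathfrak m$ of $\mathfrak h$ in $\mathfrak g$, which we identify with $T_o\tilde M$. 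Under this identification $g$, $J$ and $G=\tilde\nabla J$ become $\mathrm{Ad}(H)$-invariant tensors on $\mathfrak m$; in particular $H$ embeds into $\U(3)$ and fixes the nonzero element $G_o\in\Lambda^2\mathfrak m^\ast\otimes\mathfrak m$, which already strongly restricts $\mathfrak h$.

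\emph{Step 2: $\tilde M$ is $3$-symmetric.} This is the crux of the argument. Consider the canonical Hermitian connection $\bar\nabla_XY=\tilde\nabla_XY-\tfrac12 J\,G(X,Y)$; in the nearly Kähler case its torsion is $T(X,Y)=-J\,G(X,Y)$, and by Proposition~\ref{prop:G-properties} the $(0,3)$-tensor $(X,Y,Z)\mapsto g(T(X,Y),Z)$ is totally skew, so that $T$ is a $3$-form which, moreover, is $\bar\nabla$-parallel. When the structure is homogeneous the curvature $\bar R$ is $\bar\nabla$-parallel as well, so $\bar\nabla$ is the canonical connection of a reductive homogeneous structure; using that in dimension six the structure group reduces to $\SU(3)$ and $T$ is, up to the constant $\alpha$, a canonical tensor, one checks that $T$ determines an $\mathrm{Ad}(H)$-equivariant orthogonal transformation $\theta$ of $\mathfrak m$ with $\theta^{3}=\id$ and $\theta\neq\id$. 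Propagating $\theta$ by $\bar\nabla$-parallel transport produces an isometry $s$ of $\tilde M$ with $s^{3}=\id$, $ds_o=\theta\neq\id$, and $o$ an isolated fixed point; equivalently, $\theta$ integrates to an order-three automorphism of $\mathfrak g$ fixing $\mathfrak h$. Hence $(\tilde M,g)$ is a Riemannian $3$-symmetric space. (That a homogeneous nearly Kähler manifold is $3$-symmetric goes back to Gray~\cite{gray}; in dimension six it is also part of Butruille's original argument.)

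\emph{Step 3: the Wolf--Gray list and uniqueness of the nearly Kähler structure.} By the classification of compact simply connected Riemannian $3$-symmetric spaces, those of dimension six are exactly $G_2/\SU(3)$, $\Sp(2)/(\Sp(1)\times\U(1))$, $\SU(3)/T^{2}$ and $(\SU(2)\times\SU(2)\times\SU(2))/\Delta\SU(2)$, diffeomorphic respectively to $\Ss^6$, $\C P^3$, $F(\C^3)$ and $\Ss^3\times\Ss^3$ (for the last, the relevant homogeneous metric is not the Riemannian product metric). On each of these four spaces the isotropy representation of $H$ on $\mathfrak m$ is explicit, and a short representation-theoretic computation shows that the $\mathrm{Ad}(H)$-invariant pair $(g,J)$ which makes $G/H$ strict nearly Kähler is unique up to a positive scalar, which is then normalised away --- this accounts for the ``up to homotheties''. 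Combining with Step 2, this yields exactly the four manifolds in the statement. The main obstacle is Step 2: upgrading ``homogeneous $+$ nearly Kähler'' to ``$3$-symmetric'' by extracting the canonical order-three isometry from the parallel torsion of the canonical Hermitian connection; Step 1 is a routine curvature-plus-Lie-theory argument, and Step 3 is a finite check against the known tables of $3$-symmetric spaces.
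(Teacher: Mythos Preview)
The paper does not prove this theorem: it is stated as Butruille's classification and simply cited, so there is no proof in the paper to compare against.

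As a standalone sketch of Butruille's argument, your outline has the right architecture (compactness via Bonnet--Myers, reduction to $3$-symmetric spaces, then the Wolf--Gray tables), but Step~2 --- which you correctly flag as the crux --- is not actually carried out. Two points in particular are not justified. First, the assertion that ``when the structure is homogeneous the curvature $\bar R$ is $\bar\nabla$-parallel'' does not follow from transitivity of the isometry group alone; you would need to know that the canonical Hermitian connection coincides with the canonical connection of some reductive decomposition, which is precisely part of what must be proved. Second, the sentence ``$T$ determines an $\mathrm{Ad}(H)$-equivariant orthogonal transformation $\theta$ of $\mathfrak m$ with $\theta^3=\id$'' is an assertion, not a construction: you never say what $\theta$ is or why it has order three. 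You then close the circle by citing Gray and Butruille for this very step, which defeats the purpose.

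Butruille's actual route in dimension six is more hands-on: after Step~1 he exploits the reduction of the structure group to $\SU(3)$, lists the possible connected isotropy subgroups $H\subset\SU(3)$ compatible with a nonzero $\mathrm{Ad}(H)$-invariant tensor of type $G$, and for each candidate determines the possible $\mathfrak g$ and checks the nearly K\"ahler condition directly. The $3$-symmetry then emerges a posteriori from the list rather than being established abstractly beforehand. If you want to keep your Step~2 as stated, you need to supply the missing construction of the order-three symmetry (for instance via the explicit formula $\theta=-\tfrac12\id+\tfrac{\sqrt3}{2}J$ on $\mathfrak m$, and then argue that it integrates to an automorphism of $\mathfrak g$), rather than defer to the references.
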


\subsection{Almost contact structures}
An almost contact (metric) structure consists of an odd-dimensional Riemannian manifold $(M,g)$, a unit vector field~$\xi$ (known as the Reeb vector field), the dual one-form $\eta=\xi\contr g$ and a tensor $\varphi$ that satisfies
\begin{equation}
g(\varphi X,\varphi Y)=g(X,Y)-\eta(X)\eta(Y), \qquad \varphi^2=-\id+\eta\otimes\xi. \label{compatibilitypropalmostcontac}    
\end{equation}
As a consequence, $\varphi$ and $\eta$ satisfy $\varphi(\xi)=0$ and $\eta\circ\varphi=0$.

Now assume that $M$ is a hypersurface of an almost Hermitian manifold $(\tilde{M},g,J)$ with unit normal vector field $N$, second fundamental form $h$ and shape operator $S$. 
Then $M$ carries a natural almost contact structure given by
\begin{equation}
\xi=-JN,  \qquad \varphi=J-\eta\otimes N. \label{phi1}  
\end{equation}

If, in addition, the ambient space is nearly Kähler, then, by plugging the expressions~\eqref{phi1} into the condition that $\tilde\nabla J$ is skew-symmetric, we obtain the following conditions on $\varphi$, $h$ and $S$:
\begin{equation}
    \begin{split}
        (\nabla_X\varphi)Y+(\nabla_Y\varphi)X+2h(X,Y)\xi-\eta(X)SY-\eta(Y)SX&=0, \\
        h(\varphi X,Y)+h(X,\varphi Y)+g(\nabla_X\xi,Y)+g(\nabla_Y\xi,X)&=0.
    \end{split}\label{blairsequations}
\end{equation}
From these equations, we note that a natural class of almost contact hypersurfaces to study are the ones whose Reeb vector field $\xi$ is Killing, known as almost $K$-contact manifolds.
Such hypersurfaces are equivalent with hypersurfaces that satisfy
$S\varphi=\varphi S$, where $S$ is the shape operator associated to $N$.

\begin{definition}
    Let $(M,g,\xi,\varphi)$ be an almost contact manifold.
    We say that $M$ is
    \begin{enumerate}
        \item Sasakian if $(\nabla_X\varphi )Y=\eta(Y)X-g(X,Y)\xi$ for all $X,Y\in\mathfrak{X}(M)$,
        \item co-Kähler if $\nabla\varphi\equiv0$,
        \item nearly Sasakian if $(\nabla_X\varphi)Y+(\nabla_Y\varphi)X=\eta(Y)X+\eta(X)Y-2g(X,Y)\xi$  for all $X,Y\in\mathfrak{X}(M)$. 
        \item nearly cosymplectic if $(\nabla_X\varphi)Y+(\nabla_Y\varphi)X=0$ for all $X,Y\in\mathfrak{X}(M)$,
        
    \end{enumerate}\label{definitionsalmostcontact}
\end{definition}
Note that by taking $-\xi$ as the Reeb vector field instead of $\xi$, we obtain an equivalent definition of (nearly) Sasakian, often found in the literature.

\section{Almost contact structures on hypersurfaces}
\noindent
We prove Theorem~\ref{theoalmostcontact} in this section.
In order to do so, we need the covariant derivatives of the almost contact structures $\{(\xi,\phi_i)\}_{i=1,2,3}$ on a hypersurface of a nearly Kähler manifold of constant type $\alpha=1$.
Recall the definitions
\[
    \phi_1 = J-\eta\otimes N, \qquad 
    \phi_2 = \xi\contr G, \qquad
    \phi_3 = \phi_2\phi_1 = N\contr G = \nabla\xi-\phi_1S.
\]
We denote by $\omega_i$ the respective fundamental forms of the tensors $\phi_i$, that is, $\omega_i(X,Y)=g(\phi_i X,Y)$.
A direct computation using the Gauss and Weingarten formulas, Proposition~\ref{prop:G-properties}, and Equations~\eqref{eq:nablaG} and \eqref{eq:propg5}, yields
\begin{equation}
    \begin{split}
            (\nabla_X \phi_1) Y&=G(X,Y)-\omega_3(X,Y)N+\eta(Y)SX-h(X,Y)\xi, \\
            (\nabla_X \phi_2) Y&= \eta(Y)\phi_1 X-\eta(X)\phi_1 Y-\omega_1(X,Y)\xi \\
            &\quad+\omega_2(SX,Y)N+G(\phi_1SX,Y)+g(X,S\xi)\phi_3Y, \\
            (\nabla_X \phi_3) Y&=\eta(Y)X-g(X,Y)\xi-G(SX,Y)+\omega_3(SX,Y)N.
    \end{split}\label{covariant derivatives}
\end{equation}

\begin{proof}[Proof of Theorem~\ref{theoalmostcontact}]
    Recall that (nearly) Sasakian, co-Kähler and nearly cosymplectic manifolds are almost $K$-contact.
    Moreover, recall that any almost contact structure on $M$ with Reeb vector field $\xi=-JN$ is of the form $\phi=a \phi_1+b \phi_2+c\phi_3$ with $a^2+b^2+c^2=1$.
    
    From Equation~\eqref{blairsequations} it follows that for an almost $K$-contact hypersurface of a nearly Kähler manifold, $\xi$ is an eigenvector of the shape operator $S$.
    Now let $\zeta$ be a unit eigenvector of $S$ contained in $\ker\eta$.
    This way, $\mathcal{F}=\{\zeta,\phi_1\zeta,\phi_2\zeta,\phi_3\zeta,\xi\}$ is an orthonormal frame on~$M$. 
    If we denote by $h_{ij}$ the components of $S$ with respect to $\mathcal{F}$, then
    Equation~\eqref{blairsequations} yields $h_{ij}=0$ for $i\neq j$ and $h_{11}=h_{22}$, $h_{33}=h_{44}$.

    In what follows, we use the expressions for the covariant derivatives in~\eqref{covariant derivatives} and we calculate the relevant defining equation of the hypersurface (see Definition~\ref{definitionsalmostcontact}) with $X,Y\in\mathcal F$, taking the inner product with $Z \in \mathcal F$.

    \textit{Case~(\ref{theosas}).}
    If we choose $X=\zeta$, $Y=\phi_1\zeta$ and $Z=\xi$, we obtain $b=0$.
    In addition, taking $X,Y,Z \in \{\xi,\zeta,\phi_2\zeta\}$ all different, we obtain $a=ch_{11}=ch_{33}=ch_{55}$, from which it follows that $c$ cannot vanish.
    Hence, $h_{11}=h_{33}=h_{55}$.
    Moreover, choosing $X \in \mathcal F$, $Y=\zeta$ and $Z=\phi_1\zeta$, we see that $a$ and $c$ are constant.
    Combining this with $a^2+c^2=1$ and with $ah_{11}=1-c$, which we obtain from $X,Y=\zeta$ and $Z=\xi$, we conclude that $a=0$, $c=1$ and $h_{11}=h_{33}=h_{55}=0$.

    \textit{Case~(\ref{theocoK}).}
    In a similar way as for Case~(\ref{theosas}), we obtain that $b=0$, $a$ and $c\neq 0$ are constants, $a=ch_{11}$ and $ah_{11}=-c$, which contradict the fact that $a^2+b^2+c^2=1$.

    \textit{Case~(\ref{theonS}).}
    Taking $X=Y\in \mathcal{F}\smallsetminus\{\xi\}$ and $Z\in\{\phi_1 X,\phi_2X,\phi_3X\}$ on the one hand, and $X=\zeta$, $Y=\xi$ and $Z\in\{\phi_1\zeta,\phi_2\zeta,\phi_3\zeta\}$ on the other hand, we see that $a$ is constant and $b$, $c$ are constant in all directions in $\ker(\eta)$, and 
    \begin{equation}
        \xi(b)=c(h_{55}-h_{11}), \qquad \xi(c)=b (h_{11}-h_{55}). \label{derivativesabc}
    \end{equation}
    To continue, we consider two cases: $c=0$ and $c\neq0$.

    Suppose that $c=0$.
    Then $a=\cos t$, $b=\sin t$ where $t$ is a constant and $\sin t(h_{11}-h_{55})=0$ by~\eqref{derivativesabc}. 
    If we plug in $X,Y=\zeta$ and $Z=\xi$ on the one hand, and $X=\xi$ and $Y,Z=\phi_2\zeta$ on the other hand, then we obtain respectively $h_{11}\cos t=1$ and $h_{33}\cos t=1$.
    If $t=0$, then we obtain~(\ref{blairscase}).
    If $t\neq0$, then $h_{11}=h_{33}=h_{55}=\sec t$ and $\phi=\cos t\phi_1+\sin t \phi_2$, which is~(\ref{secantcase}).
    
    Suppose that $c\neq0$.
    Choosing $X=\zeta$, $Y=\phi_2\zeta$ and $Z=\xi$, we obtain $h_{33}=h_{11}$.
    We again consider two cases: $a=0$ and $a\neq0$.
    If $a=0$, then taking $X,Z=\zeta$ and $Y=\xi$, we get $c=1$, $b=0$, and from~\eqref{derivativesabc} we conclude that $h_{55}=h_{11}$.
    This is~(\ref{casetotumb}).
    If $a\neq0$, then $X,Y=\zeta$ and $Z=\xi$ yields $h_{33}=h_{11}=\tfrac{1-c}{a}$, and from~\eqref{derivativesabc} we obtain $h_{55}=\frac{a \xi(b)-c^2+c}{a c}$. 
    This is~(\ref{nSnonconstantcase}).

    \textit{Case~(\ref{theonCos}).}
    Taking $X=Y\in \mathcal{F}\smallsetminus\{\xi\}$ and $Z\in\{\phi_1 X,\phi_2X,\phi_3X\}$ on the one hand, and $X=\zeta$, $Y=\xi$ and $Z\in\{\phi_1\zeta,\phi_2\zeta,\phi_3\zeta\}$ on the other hand, we see that $a$ is constant and $b$, $c$ are constant in all directions in $\ker(\eta)$, and 
    \begin{equation}
        \xi(b)=c(h_{55}-h_{11}),  \qquad \xi(c)=b (h_{11}-h_{55}). \label{derivativesncosabc}
    \end{equation}
    To continue, we consider two cases: $c=0$ and $c\neq0$.

    Suppose that $c=0$.
    Then $a=\cos t$, $b=\sin t$ where $t$ is a constant and $\sin t(h_{11}-h_{55})=0$, by~\eqref{derivativesncosabc}. 
    If we plug in $X,Y=\zeta$ and $Z=\xi$ on the one hand, and $X=\xi$ and $Y,Z=\phi_2\zeta$ on the other hand, then we obtain respectively $h_{11}\cos t=0$ and $h_{33}\cos t=0$.
    If $t=\tfrac{\pi}{2}$, then we obtain~(\ref{caseumbncos}).
    If $t=0$, then $h_{11}=h_{33}=0$ and $\phi=\phi_1$, which yields~(\ref{casequasncos}). 
    If $t\neq0$ or $\tfrac{\pi}{2}$, then $h_{11}=h_{33}=h_{55}=0$ and $\phi=\cos t\phi_1+\sin t \phi_2$, which is~(\ref{casetotgeoncos}).
    
    Suppose that $c\neq0$.
    Plugging in $X,Y=\zeta$ and $Z=\xi$, we obtain $ah_{11}=-c$, therefore $a$ cannot be zero.
    Moreover, the same equation with $X=\zeta$, $Y=\phi_2\zeta$ and $Z=\xi$ yields $h_{11}=h_{33}$.
    From~\eqref{derivativesncosabc} we then obtain $h_{55}=\frac{a \xi(b)-c^2}{a c}$. 
    This is~(\ref{casefreencos}).
\end{proof}

\section{The four six-dimensional homogeneous nearly Kähler spaces}
\noindent
In what follows we describe the nearly Kähler structures and other related tensors on the manifolds that appear in Butruille's classification, Theorem~\ref{thm:butruille}.
As this result is up to homotheties, we choose the metric on each of these spaces in such a way that the type $\alpha$ is equal to one.

\subsection{The sphere \texorpdfstring{$\boldsymbol{\Ss^6}$}{S6}}
Let $\mathbb{O}$ be the standard real octonion algebra. 
We may identify the imaginary octonions $\mathrm{Im}(\mathbb{O})$ with $\R^7$, which admits an inner product $\li x,y\ri=-\tfrac{1}{2}(xy+yx)$ and a cross product $x\times y=\tfrac{1}{2}(xy-yx)$.
We define the six-sphere $\Ss^6$ as the set of unit length imaginary octonions, equipped with the induced metric from $\R^7$, and an almost complex structure~$J$ defined by $J_pX=p\times X$.
With these structures, $(\Ss^6,\li\cdot,\cdot\ri,J)$ is a strict nearly Kähler manifold.
As a homogeneous space, $\Ss^6$ is the quotient $G_2/\SU(3)$.

Since the metric $\li\cdot,\cdot\ri$ is the round metric on $\Ss^6$, the curvature of the associated Levi-Civita connection is given by
$\tilde R(X,Y)Z=\li Y,Z\ri X-\li X,Z\ri Y.$

\subsection{The nearly Kähler \texorpdfstring{$\boldsymbol{\Ss^3\times\Ss^3}$}{S3xS3}}
Let $\mathbb{H}$ be the standard real quaternion algebra. 
We identify the three-sphere $\Ss^3$ with the unit quaternions. 
That is, $\Ss^3=\{p\in\mathbb{H}:\langle p,p\rangle=1\}$.
The tangent space at a point $p\in \Ss^3$ is then given by $T_p\Ss^3=p^\perp=\{p\alpha:\alpha\in\mathrm{Im}(\mathbb{H})\}$.

We define a metric $g$ on $\Ss^3 \times \Ss^3$ by $g((p\alpha,q\beta),(p\gamma,q\delta))=\frac{4}{9}\li (p\alpha,q\beta),(p\gamma,q\delta)\ri-\frac{2}{9}\li(p\beta,q\gamma),(p\gamma,q\delta)\ri$
and an almost complex structure $J$ by
$J(p\alpha,q\beta)=\frac{1}{\sqrt{3}}(p(\alpha-2\beta),q(2\alpha-\beta))$.
One can show that $(\Ss^3\times\Ss^3,g,J)$ is a strict nearly Kähler manifold.
As a homogeneous space, $\Ss^3\times\Ss^3$ is the quotient $\frac{\SU(2)\times\SU(2)\times\SU(2)}{\Delta\SU(2)}$.

The curvature of the Levi-Civita connection associated to~$g$ is, for instance, given in~\cite{lagrS3S3} and is presented as follows:
\begin{equation}
\label{eq:curvatures3s3}
    \begin{aligned}
    \tilde R(X,Y)Z
    &=\frac{5}{4} \big(g(Y,Z)X - g(X,Z)Y\big)\\
    &\quad +\frac{1}{4}
    \big(g(JY,Z)JX - g(JX,Z)JY - 2g(JX, Y )JZ\big)\\
    &\quad + 
    g(PY,Z)PX - g(PX,Z)PY
    + g(JPY,Z)JPX - g(JPX,Z)JPY,
    \end{aligned}
\end{equation}
where $P$ is an almost product structure on $\Ss^3\times\Ss^3$ given by 
\begin{equation}
\label{eq:product-structure-s3xs3}
    P(p\alpha,q\beta)=(p\beta,q\alpha).
\end{equation}
This tensor satisfies the following properties, where we use the notation $G=\tilde \nabla J$:
\begin{equation}
\label{pprop}
    \begin{alignedat}{2}
        &P^2=\id, 
        &&g(PX,PY)=g(X,Y),\\
        &PJ=-JP, \hspace{1.5 cm}
        &&PG(X,Y)=-G(PX,PY).
    \end{alignedat}
\end{equation}

\subsection{The nearly Kähler \texorpdfstring{$\boldsymbol{\C P^3}$}{CP3}}
\label{sectioncp3structure}
It is well known that there exists a Riemannian submersion $\pi:\C P^3\to \Ss^4$, the so-called twistor fibration.
The associated vertical and horizontal distributions are denoted by $\mathcal D_1$ and $\mathcal D_2$, respectively.
Now define the metric $g$ and almost complex structure $J$ as follows: 
\[
    g=\begin{cases}
     g_o & \text{ on }\mathcal{D}_1,\\
     \frac{1}{2} g_o & \text{ on }\mathcal{D}_2,
    \end{cases} \hspace*{1.5cm}
    J=\begin{cases}
        J_o & \text{ on }\mathcal{D}_1, \\
        -J_o & \text{ on }\mathcal{D}_2, \\
    \end{cases}
\]
where $(g_o,J_o)$ is the standard Kähler structure on $\C P^3$.
One can show that $(\C P^3,g,J)$ is a strict nearly Kähler manifold, with homogeneous representation $\frac{\Sp(2)}{\Sp(1)\U(1)}$.

The curvature of the Levi-Civita connection associated to~$g$ is given in \cite{michaelthesis}, and is presented as follows:
\begin{equation}
\label{eq:curvature-cp3}
    \begin{aligned}
        \tilde R(X,Y)Z 
        =& \frac{5}{4} \big(g(Y,Z)X-g(X,Z)Y\big) \\
        &-\frac{1}{4}\big( g(JY,Z)JX - g(JX,Z)JY + 2g(X,JY)JZ \big) \\
        & +\frac{1}{2} \big( g(J_oY,Z)J_oX - g(J_oX,Z)J_oY + 2g(X,J_oY)J_oZ \big) \\
        & +\frac{1}{2} \big( g(JJ_oY,Z)X - g(JJ_oX,Z)Y + g(Y,Z)JJ_oX - g(X,Z)JJ_oY \big) \\
        & + g(JJ_oY,Z)JJ_oX - g(JJ_oX,Z)JJ_oY. \\
    \end{aligned}
\end{equation}

\subsection{The flag manifold \texorpdfstring{$\boldsymbol{F(\C^3)}$}{F(C3)}}
\label{sec:flag-introduction}
The manifold of full flags in $\C^3$ is defined as the reductive homogeneous space $\frac{\SU(3)}{\U(1)\times\U(1)}$.
Indeed, one can show that the Lie algebra $\mathfrak{su}(3) = \mathfrak h \oplus \mathfrak m$ where $\mathfrak h$ is the Lie algebra of $\U(1)\times\U(1):= H$ and $\mathfrak m$ is $\mathrm{Ad}_H$-invariant.
In fact, $\mathfrak m$ splits into three two-dimensional subspaces $\mathfrak m_i\ (i=1,2,3)$ which are each invariant under the adjoint action of $H$.

We may define an almost complex structure $J$ on $\mathfrak m$ that preserves each factor and as metric $g$ on $\mathfrak m$ we may take the restriction of the Killing form on $\mathfrak{su}(3)$. 
Identifying $\mathfrak m$ with $T_oF(\C^3)$, we left extend this almost complex structure and metric to the whole space, thus obtaining a strict nearly Kähler manifold.
Note that the left extension of the subspaces $\mathfrak m_i$ gives rise to three left-invariant distributions $\mathcal D_i$ on $TF(\C^3)$.
For more details, we refer to~\cite{kamilluc}.

Now define, for $i=1,2,3$, the almost complex structures
\begin{equation*}
J_i=\begin{cases}
    J & \text{on $\mathcal{D}_i$},\\
    -J & \text{otherwise}.\\
\end{cases}
\end{equation*}
The almost complex structures $J$ and $J_1,J_2,J_3$ appear in a natural way in the curvature tensor of the Levi-Civita connection associated to the nearly Kähler metric~$g$, namely:
\begin{equation}
\label{eq:flag-curvaturetensor}
    \begin{split}
        \tilde R(X,Y)Z&=\frac{1}{4}(g(Y,Z)X - g(X,Z)Y )\\
        &\quad- \frac{1}{4} (g(J  Y,Z)J  X - g(J  X,Z)J  Y + 2g(X, J  Y )J  Z)\\
        &\quad+ \frac{1}{2} (g(J_1Y,Z)J_1X - g(J_1X,Z)J_1Y + 2g(X, J_1Y )J_1Z)\\
        &\quad+ \frac{1}{2} (g(J_2Y,Z)J_2X - g(J_2X,Z)J_2Y + 2g(X, J_2Y )J_2Z)\\
        &\quad+ \frac{1}{2} (g(J_3Y,Z)J_3X - g(J_3X,Z)J_3Y + 2g(X, J_3Y )J_3Z).
    \end{split}
\end{equation}

\section{Hypersurfaces with constant sectional curvature}
\noindent
In this section we prove Theorem~\ref{maintheorem}.
To do so, we first show that a hypersurface with constant sectional curvature in $\Ss^3\times\Ss^3$, $\C P^3$ or $F(\C^3)$ is $\eta$-quasi-umbilical, where $\eta = JN \contr g$ and $N$ is a unit normal to $M$.
Secondly, we use the non-existence of $\eta$-quasi-umbilical hypersurfaces in these spaces~\cite{loubeau,hu}.
The case of $\Ss^6$ is treated separately.

The essence of the proof of Theorem~\ref{maintheorem} is Equation~\eqref{lucequation}.
In order to calculate $g( \tilde R(X,Y)Z, SW )$, we address each ambient space separately and determine an orthogonal frame $\{E_1,\ldots,E_5\}$ on the hypersurface with respect to which we can easily determine the tensors present in the expression of $\tilde R$.
Since the shape operator $S$ also appears, we denote by $h_{ij} = g(S E_i, E_j)$ its components with respect to said frame.
The aim of calculating Equation~\eqref{lucequation} is to determine these components.

\begin{remark}
\label{remark-hor-lines}
    The result of Equation~\eqref{lucequation} is listed in Table~\ref{tableS3S3independent} up to Table~\ref{table1FC3D1D2D3}.
    A horizontal dotted line indicates that, in order to obtain the equations below the line, the information provided by the equations above has to be used. 
\end{remark}

\textit{Proof of Theorem~\ref{maintheorem}.}
Let $M$ be a hypersurface with constant sectional curvature in a six-dimensional homogeneous nearly Kähler manifold.
We divide our argument into four different cases, depending on the ambient space in which $M$ is immersed: $\Ss^6$, $\Ss^3\times\Ss^3$, $\C P^3$ or $F(\C^3)$.

\subsection*{Hypersurfaces of \texorpdfstring{$\boldsymbol{\Ss^6}$}{S6}}
The shape operator $S$ associated to $N$ diagonalizes with respect to a \emph{smooth} orthogonal frame, see~\cite[Lemma\ 1.2]{szabo}.
By plugging this frame into the Gauss equation~\eqref{eq:gauss-equation}, we conclude that either $M$ has sectional curvature $c$ greater than one and is an open subset of a totally umbilical~$\Ss^5$,
or that $c=1$.
In the latter case, if, in addition, $M$ is complete, it follows from Corollary 7.13 in~\cite{tojeirobook} that $M$ is a totally geodesic~$\Ss^5$.

\subsection*{Hypersurfaces of \texorpdfstring{$\boldsymbol{\Ss^3\times\Ss^3}$}{S3xS3}}
Recall that $\Ss^3 \times \Ss^3$ carries an almost product structure $P$, given by Equation~\eqref{eq:product-structure-s3xs3}.
We divide our argument into two cases: we consider the case where $PN$ is linearly independent from~$N$ and~$JN$, and the case where $PN$ is a linear combination of~$N$ and~$JN$.

\begin{lemma}
    Let $M$ be a hypersurface of $\Ss^3\times\Ss^3$ with constant sectional curvature.
    Let $N$ be its unit normal and suppose that $PN$ is linearly independent from $N$ and $JN$.
    Then $M$ is totally umbilical.
\end{lemma}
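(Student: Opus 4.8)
The plan is to exploit Equation~\eqref{lucequation}, the Tsinghua-type identity, together with the explicit curvature tensor~\eqref{eq:curvatures3s3} of $\Ss^3\times\Ss^3$ and the algebraic properties~\eqref{pprop} of the almost product structure $P$. Since $PN$ is assumed linearly independent from $N$ and $JN$, the vectors $N$, $JN$, $PN$, $JPN$ are mutually orthogonal unit-length (up to normalization) vectors in the ambient tangent space; using $PJ=-JP$ and $P^2=\id$ one checks $JPN=-PJN$ is again orthogonal to the first three. This gives a natural splitting of the ambient tangent space along $M$: the normal $N$, and a tangent frame which I would build from $\xi=-JN$, the tangential projections of $PN$ and $JPN$, and a complementary unit vector. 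Concretely, I expect to choose an orthogonal frame $\{E_1,\dots,E_5\}$ on $M$ adapted so that $PN$ and $JPN=JP N$ have controlled components, e.g.\ $PN = \cos\theta\, N + \sin\theta\, E_1$ for some function $\theta$, with $E_1$ tangent, and then $JPN$, $PE_1$, $JP E_1$ expressed back in terms of the frame and $N$ using~\eqref{pprop}.

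The key computation is then to plug this frame into~\eqref{lucequation}: for each choice of $Z\in\{E_1,\dots,E_5\}$ and each cyclic triple $\{X,Y,W\}$, the left-hand side $\cycl{X,Y,W} g(\tilde R(X,Y)Z, SW)$ becomes a linear expression in the shape-operator components $h_{ij}=g(SE_i,E_j)$ with coefficients built from $\tfrac54$, $\tfrac14$ and the inner products $g(PE_i,E_j)$, $g(JPE_i,E_j)$, $g(\theta\text{-terms})$. Setting all these expressions to zero yields a linear system in the $h_{ij}$. I would organize the output exactly as Remark~\ref{remark-hor-lines} suggests: the first batch of equations forces most off-diagonal $h_{ij}$ to vanish and equates several diagonal entries; feeding that back (the "horizontal dotted line" step) into the remaining equations collapses everything to $S = \alpha\,\id$ for some function $\alpha\in C^\infty(M)$. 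The $\eta$-direction behaves like the others here because $PN\notin\operatorname{span}\{N,JN\}$ means $\xi=-JN$ is not singled out by $P$, so no residual $\eta\otimes\xi$ term survives — this is precisely why we get \emph{totally umbilical} rather than merely $\eta$-quasi-umbilical in this sub-case.

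The main obstacle I anticipate is bookkeeping: the curvature tensor~\eqref{eq:curvatures3s3} has both $J$-terms and $P$-terms, and $P$ does not preserve the tangent space of $M$, so every appearance of $PE_i$ or $JPE_i$ must be resolved into (tangent part) $+$ (normal multiple of $N$), generating many cross terms. Choosing the frame cleverly — aligning $E_1$ with the tangential part of $PN$ and then tracking how $P$ permutes the frame — is essential to keep the system manageable; a poor choice produces an intractable mess. A secondary subtlety is that the function $\theta$ (equivalently $g(PN,N)$) is a priori non-constant, so some equations will involve derivatives of frame vectors through $S$; I expect these to either decouple or to be automatically satisfied once the diagonal structure of $S$ is established, but verifying that cleanly is where care is needed. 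Once $S=\alpha\,\id$ is obtained, no further work is required: that is the definition of totally umbilical, completing the lemma.
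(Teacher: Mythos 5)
Your overall strategy is the paper's: build an adapted orthogonal frame, express $P$ in that frame, and feed everything into the Tsinghua identity~\eqref{lucequation} to get a linear system forcing $S=\alpha\,\id$. But the setup as written contains errors and the decisive step is asserted rather than carried out. First, $N$, $JN$, $PN$, $JPN$ are \emph{not} mutually orthogonal in general: $g(PN,N)=\theta_1$ and $g(PN,JN)=\theta_2$ are arbitrary functions subject only to $\theta_1^2+\theta_2^2\neq 1$, and likewise $g(JPN,N)=-\theta_2$, $g(JPN,JN)=\theta_1$. Your ansatz $PN=\cos\theta\,N+\sin\theta\,E_1$ with $E_1$ tangent and orthogonal to $\xi$ therefore cannot hold; one must subtract \emph{both} components, $V=PN-\theta_1 N-\theta_2 JN$, to get a vector orthogonal to $N$ and $JN$ (and then $V$ has norm squared $1-\theta_1^2-\theta_2^2$, so the frame is orthogonal but not orthonormal --- a normalization you must track when reading off umbilicity from the diagonal entries). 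Second, your frame has only four slots ($\xi$, two projections, one complementary vector) for a five-dimensional hypersurface, and you never say how to fill the remaining directions. The workable choice is $\{JN,V,JV,G(V,N),JG(V,N)\}$: without invoking the nearly K\"ahler tensor $G$ you have no control over how $P$ and $J$ act on the complementary $2$-plane, which is exactly the ``intractable mess'' you warn against.

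The more substantive gap is that the conclusion is not derived. Which off-diagonal components vanish, and whether the diagonal entries all coincide (totally umbilical) or coincide only on $\ker\eta$ ($\eta$-quasi-umbilical), is precisely the content of the linear system coming from~\eqref{lucequation}; your heuristic that ``$\xi$ is not singled out by $P$'' is not an argument, especially since $JN$ is the one unit vector in the frame while the others have norm $1-\theta_1^2-\theta_2^2$, so the relation one actually obtains is $h_{11}(1-\theta_1^2-\theta_2^2)=h_{22}=h_{33}=h_{44}=h_{55}$, which gives umbilicity only after renormalizing. You need to exhibit enough explicit choices of $(X,Y,Z,W)$ to kill all $h_{ij}$, $i\neq j$, and equate the diagonal --- the paper needs about fifteen, used in a specific order. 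Finally, a minor point: your concern about derivatives of the frame entering through $S$ is unfounded; Equation~\eqref{lucequation} is pointwise tensorial (that is the whole purpose of taking the cyclic sum of the Gauss equation), so no such terms arise.
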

\begin{proof}
We define the vector field
$V=PN-\theta_1N-\theta_2JN$,
where $\theta_1=g(PN,N)$ and $\theta_2=g(PN,JN)$.
From a direct computation 
we see that $\mathcal F = \{JN,V,JV,G(V,N),JG(V,N)\}$ is an orthogonal frame on $M$.
Moreover, $JN$ is a unit vector field and
$g(V,V)=g( G(V,N),G(V,N))=1-\theta_1^2-\theta_2^2$.
Note that $PN$ being linearly independent from $N$ and $JN$ implies $\theta_1^2+\theta_2^2\neq1$.
Then, it follows from~\eqref{pprop} that the almost product structure $P$ is given by the following matrix with respect to $\mathcal F\cup \{N\}$:
\begin{equation}
\label{eq:s3-times-s3-P-components}
    \begin{pmatrix}
         -\theta _1 & 0 & \theta _1^2+\theta _2^2-1 & 0 & 0 & \theta_2 \\
         0 & -\theta _1 & \theta_2 & 0 & 0 & 1 \\
        -1 & \theta_2 & \theta _1 & 0 & 0 & 0 \\
         0 & 0 & 0 & 1 & 0 & 0 \\
         0 & 0 & 0 & 0 & -1 & 0 \\
         \theta_2 & 1-\theta_1^2-\theta_2^2& 0 & 0 & 0 & \theta_1 \\
    \end{pmatrix}.
\end{equation}

We proceed by computing Equation \eqref{lucequation}, using~\eqref{eq:s3-times-s3-P-components} in the curvature~\eqref{eq:curvatures3s3}, plugging in the vector fields $X$, $Y$, $Z$ and $W$ as indicated in Table~\ref{tableS3S3independent} (see Remark~\ref{remark-hor-lines}).
The result of the computation is listed in the right column.
From these equations it follows that $h_{11}(1-\theta_1^2-\theta_2^2) = h_{22} = h_{33} = h_{44} = h_{55}$.
Moreover, combining the last two equations, we find that $h_{34}=0$.
Hence, when taking into account the norms of the elements of $\mathcal F$, we see that $M$ is totally umbilical with shape operator $S=h_{11}\id$.
\end{proof}

\begin{lemma}
    Let $M$ be a hypersurface of $\Ss^3\times\Ss^3$ with constant sectional curvature.
    Let $N$ be its unit normal and suppose that $PN$ is linearly dependent on $N$ and $JN$.
    Then $M$ is $\eta$-quasi-umbilical, where $\eta$ is the dual 1-form of $JN$.
\end{lemma}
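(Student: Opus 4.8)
The plan is to reproduce the structure of the proof of the previous lemma, but with a frame adapted to the Reeb direction rather than to a generic eigenvector. Since $PN$ lies in $\operatorname{span}\{N,JN\}$ and $P$ is, by \eqref{pprop}, a self-adjoint isometric involution anticommuting with $J$, I would first write $PN=\theta_1 N+\theta_2 JN$ and check, applying $P$ and using $PJ=-JP$, $P^2=\id$, that $\theta_1^2+\theta_2^2=1$. Next, observe that $\operatorname{span}\{N,JN\}=\operatorname{span}\{N,\xi\}$ is both $J$- and $P$-invariant, so its orthogonal complement $\ker\eta$ (a rank-four distribution on $M$) is $J$- and $P$-invariant as well; on $\ker\eta$ the operator $P$ is a self-adjoint involution and, since $PJ=-JP$ there, $J$ interchanges its $(+1)$- and $(-1)$-eigenspaces. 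In particular each eigenspace is exactly two-dimensional: if $P|_{\ker\eta}=\pm\id$, then $PJ=-JP$ would force $J|_{\ker\eta}=0$, which is impossible. Choosing a local orthonormal frame $\{E_1,E_2\}$ of $\ker(P-\id)|_{\ker\eta}$, the set $\mathcal F=\{E_1,E_2,JE_1,JE_2,\xi\}$ with $\xi=-JN$ is then a local orthonormal frame on $M$, and $P$ acts on $\mathcal F\cup\{N\}$ as $\operatorname{diag}(1,1,-1,-1)$ on the first four vectors together with $P\xi=-\theta_1\xi-\theta_2 N$ and $PN=\theta_1 N-\theta_2\xi$.

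With $\mathcal F$ in hand the argument becomes computational. I would record the matrices of $J$, $P$ and $JP$ with respect to $\mathcal F\cup\{N\}$ (all determined by the relations above, $J\xi=N$, and $J^2=-\id$), plug them into the curvature tensor \eqref{eq:curvatures3s3}, and evaluate Equation~\eqref{lucequation} for $X,Y,Z,W$ ranging over $\mathcal F$. This produces a list of linear relations among the components $h_{ij}=g(SE_i,E_j)$, which I would organise in a table analogous to Table~\ref{tableS3S3independent}, with the dotted-line convention of Remark~\ref{remark-hor-lines} since several relations only emerge after feeding back the earlier ones. I expect the outcome to be $h_{ij}=0$ for $i\neq j$ and $h_{11}=h_{22}=h_{33}=h_{44}$, with $h_{55}$ left free; equivalently $h=h_{11}\,g+(h_{55}-h_{11})\,\eta\otimes\eta$, so that $\xi$ is an eigenvector of $S$, $\ker\eta$ is an eigenspace, and $M$ is $\eta$-quasi-umbilical with $\eta$ the dual one-form of $JN$, as claimed.

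The main obstacle is bookkeeping: the curvature of $\Ss^3\times\Ss^3$ mixes the three distinct structures $J$, $P$ and $JP$, so each evaluation of \eqref{lucequation} expands into many terms, and the cyclic sum over $X,Y,W$ must be tracked carefully — especially the $\theta_1,\theta_2$-dependent coupling between $\xi$ and $N$ that appears whenever $Z$ or $W$ equals $\xi$. One should also verify that no choice of $Z$ or cyclic triple is redundant, so that enough relations are extracted to pin down all off-diagonal $h_{ij}$ and the four equalities among the tangential eigenvalues; the fact that $h_{55}$ remains unconstrained is precisely what distinguishes this case from the totally umbilical conclusion of the previous lemma. Finally, smoothness of $\mathcal F$ is only local, but this suffices since $\eta$-quasi-umbilicity is a local condition.
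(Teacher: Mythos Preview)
Your proposal is correct and follows essentially the same approach as the paper: exploit the $P$-invariance of $\ker\eta$ to build an orthonormal frame adapted to the eigenspace decomposition of $P$, compute the matrices of $J$, $P$, $JP$ with respect to this frame, and feed them into Equation~\eqref{lucequation} via the curvature~\eqref{eq:curvatures3s3} to obtain linear relations forcing the shape operator to be $\eta$-quasi-umbilical. The only inessential difference is that you take $E_1,E_2$ both in the $(+1)$-eigenspace of $P|_{\ker\eta}$ and place $\xi$ last, whereas the paper chooses one generator in each eigenspace ($PV_1=V_1$, $PV_2=-V_2$) and places $JN$ first; the two frames are related by a permutation and sign changes, so the resulting tables of relations are equivalent under the corresponding relabeling of the $h_{ij}$.
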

\begin{proof}
    Since $P$ is compatible with the metric, $PN$ has unit length and therefore $PN=\cos \theta N+\sin \theta JN$ for some $\theta \in C^\infty(M)$.
    Now the tangent bundle of $\Ss^3\times\Ss^3$ splits in two $P$-invariant distributions, that is, $T (\Ss^3\times\Ss^3)=\mathcal{D}_1\oplus\mathcal{D}_2$ where $\mathcal{D}_1=\mathrm{span}\{N,JN\}$ and $\mathcal{D}_2=\mathcal{D}_1^\perp$. 
    Since $\mathcal{D}_2$ is $P$-invariant, it is generated by the orthonormal set of smooth vector fields $\{V_1,V_2,JV_1,JV_2\}$, where $V_1$ and $V_2$ 
    at each point are eigenvectors of $P|_{TM}$ associated to $1$ and $-1$, respectively.
    As $P$ anti-commutes with $J$, the vector fields $JV_1$ and $JV_2$ are eigenvectors of $P|_{TM}$ associated to $-1$ and $1$, respectively.
    Hence, $\mathcal{F}=\{JN,V_1,V_2,JV_1,JV_2\}$ is an orthonormal frame on $M$.

    We proceed by computing Equation~\eqref{lucequation}, using the curvature~\eqref{eq:curvatures3s3}, and taking the vector fields $X$, $Y$, $Z$ and $W$ as indicated in Table~\ref{tableS3S3casedependent} (see Remark~\ref{remark-hor-lines}).
    The result of the computation is listed in the right column.
    From these calculations it follows that the shape operator with respect to $\mathcal F$ is diagonal.
    Moreover, from the first three equations we deduce that $h_{22}=h_{33}=h_{44}=h_{55}$.
\end{proof}

\subsection*{Hypersurfaces of \texorpdfstring{$\boldsymbol{\C P^3}$}{CP3}}
Recall that the tangent bundle of $\C P^3$ splits as $\mathcal D_1 \oplus \mathcal D_2$ (see Section~\ref{sectioncp3structure}), where $\mathrm{rank}\ \mathcal D_1 = 2$ and $\mathrm{rank}\ \mathcal D_2 = 4$.
We divide our argument into three cases: $N\in\mathcal{D}_1$, $N\in \mathcal{D}_2$ and $N=V_1+V_2$ where $V_i\in \mathcal{D}_i\smallsetminus\{0\}$, $i=1,2$. 

\begin{lemma}
    Let $M$ be a hypersurface of $\C P^3 $ with constant sectional curvature. 
    Let $N$ be its unit normal and suppose that $N\in\mathcal{D}_1$. 
    Then $M$ is $\eta$-quasi-umbilical, where $\eta$ is the dual 1-form of $JN$.
\end{lemma}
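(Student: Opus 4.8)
The plan is to mimic the structure of the preceding lemmas, specializing to the case $N\in\mathcal D_1$. First I would set up an explicit orthonormal frame adapted to the splitting $TM = (\mathcal D_1\cap TM)\oplus \mathcal D_2$. Since $N\in\mathcal D_1$ and $\mathrm{rank}\,\mathcal D_1 = 2$, and $\mathcal D_1$ is $J$-invariant, we have $JN\in\mathcal D_1\cap TM$ and in fact $\mathcal D_1\cap TM = \mathrm{span}\{JN\}$; the remaining four tangent directions lie entirely in $\mathcal D_2$. Because $\mathcal D_2$ is $J$- and $J_o$-invariant, I can build a frame $\mathcal F = \{JN, E_2, E_3, E_4, E_5\}$ with $E_2,\dots,E_5\in\mathcal D_2$ arranged so that $J$ and $J_o$ (hence $JJ_o$) act on them in a simple block form — e.g.\ $E_3 = JE_2$, $E_5 = JE_4$, with $J_o$ pairing $E_2$ with $E_4$ (up to signs and the $\tfrac1{\sqrt2}$-type normalizations coming from the metric rescaling on $\mathcal D_2$). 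The key point is that $\eta = JN\contr g$ is dual to the single distinguished direction $JN\in\mathcal D_1$, so $\eta$-quasi-umbilicity means exactly that $S$ is diagonal in $\mathcal F$ with $h_{22}=h_{33}=h_{44}=h_{55}$ (the common eigenvalue), while $h_{11}$ is unconstrained.

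Next I would compute $g(\tilde R(X,Y)Z,SW)$ using the curvature formula \eqref{eq:curvature-cp3}, substituting the explicit matrices for $J$, $J_o$, $JJ_o$ in the frame $\mathcal F$, and impose \eqref{lucequation} for a well-chosen list of quadruples $(X,Y,Z,W)$ drawn from $\mathcal F$. As in the $\Ss^3\times\Ss^3$ lemmas, the first batch of equations should force all off-diagonal components $h_{ij}$ ($i\neq j$) to vanish, and a second batch (using the already-derived vanishing, per Remark~\ref{remark-hor-lines}) should force $h_{22}=h_{33}$, $h_{44}=h_{55}$, and then $h_{33}=h_{44}$, collapsing the four $\mathcal D_2$-eigenvalues to one. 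This is the routine computational core: it is a finite, mechanical check once the frame and the matrices of $J,J_o,JJ_o$ are pinned down, and I would present the outcome in a table analogous to Table~\ref{tableS3S3casedependent} rather than writing out each contraction.

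The main obstacle — the one place genuine care is needed — is getting the frame right: $N\in\mathcal D_1$ is a rigidity constraint, but a priori nothing forces the other four tangent vectors to decompose so cleanly under $J_o$. I need to verify that $\mathcal D_2\cap TM = \mathcal D_2$ (which holds since $N\perp\mathcal D_2$), that $\mathcal D_2$ being four-dimensional and invariant under the commuting-up-to-sign operators $J$ and $J_o$ admits a common ``standard form'' basis, and that the metric normalizations (factor $\tfrac12 g_o$ on $\mathcal D_2$) are carried consistently into the components $h_{ij}=g(SE_i,E_j)$ and into the norms of the $E_i$. Once the frame is fixed, the rest follows the template of the earlier lemmas verbatim. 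I would close by invoking the non-existence of $\eta$-quasi-umbilical hypersurfaces in $\C P^3$ from~\cite{loubeau} only later, in the assembly of Theorem~\ref{maintheorem}; within this lemma the conclusion is simply that $S = h_{11}\,\eta\otimes\xi + h_{22}(\id - \eta\otimes\xi)$, i.e.\ $M$ is $\eta$-quasi-umbilical.
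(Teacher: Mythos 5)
Your proposal follows essentially the same route as the paper: there one takes the orthonormal frame $\{JN, V, JV, G(V,N), JG(V,N)\}$ with $V$ a unit field in $\mathcal D_2$, works out how $J$ and $J_o$ act on it, and feeds chosen quadruples into the cyclic Gauss equation~\eqref{lucequation} (Table~\ref{tableCP3D1}) to conclude that $S$ is diagonal with $h_{22}=h_{33}=h_{44}=h_{55}$. One small correction to your setup: on $\mathcal D_2$ one has $J_o=-J$ by definition, so $J_o$ does not pair $E_2$ with $E_4$ but simply sends $E_2$ to $-E_3$; this only makes the frame-finding step you worried about easier, since $JJ_o$ equals $-\id$ on $\mathcal D_1$ and $+\id$ on $\mathcal D_2$ and any orthonormal completion of the form $\{V,JV,W,JW\}$ of $\mathcal D_2$ suffices.
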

\begin{proof}

Let $V$ be a unit vector field in $\mathcal D_2$.
Then a direct computation using~\eqref{eq:norm-G-squared} with $\alpha = 1$, and the fact that $J$ is compatible with $g$, shows that 
$\mathcal F = \{JN, V,JV, G(V,N), JG(V,N)\}$
is an orthonormal frame on $M$.
Note that $\mathcal D_1 = \mathrm{span}\{N,JN\}$ and $\mathcal D_2 = \mathrm{span}\{V,JV,G(V,N),JG(V,N)\}$.

To compute Equation~\eqref{lucequation}, we need to express the curvature $\tilde R$ as well as the shape operator with respect to $\mathcal F$.
For this purpose, we determine how $J$ and $J_o$ act on the elements of $\mathcal F$, allowing us to calculate $\tilde R$ given by~\eqref{eq:curvature-cp3}.

We proceed by computing Equation~\eqref{lucequation}, using the curvature~\eqref{eq:curvature-cp3} and plugging in the vector fields $X$, $Y$, $Z$ and $W$ as indicated in Table~\ref{tableCP3D1} (see Remark~\ref{remark-hor-lines}).
The result of the computation is listed in the right column.
From these calculations it follows that the shape operator with respect to $\mathcal F$ is diagonal.
Moreover, from the first three equations we deduce that $h_{22}=h_{33}=h_{44}=h_{55}$.
\end{proof}

\begin{lemma}
    Let $M$ be a hypersurface of $\C P^3 $ with constant sectional curvature. 
    Let $N$ be its unit normal and suppose that $N\in\mathcal{D}_2$. 
    Then $M$ is $\eta$-quasi-umbilical where $\eta$ is the dual 1-form of $JN$.
\end{lemma}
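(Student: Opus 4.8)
The plan is to mirror the structure of the preceding lemma (the case $N \in \mathcal D_1$), adapting it to the situation where the unit normal lies in the rank-four distribution $\mathcal D_2$. First I would produce an orthonormal frame on $M$ adapted to the splitting $T\C P^3 = \mathcal D_1 \oplus \mathcal D_2$. Since $N \in \mathcal D_2$ and $\mathcal D_2$ is $J$-invariant, $JN \in \mathcal D_2$ as well, so the Reeb vector field $\xi = -JN$ is tangent to $M$ and lies in $\mathcal D_2$. The vertical distribution $\mathcal D_1$ is two-dimensional and entirely tangent to $M$; pick a local unit section $W$ of $\mathcal D_1$, so that $\{W, J_o W\}$ spans $\mathcal D_1$ (note $J = J_o$ on $\mathcal D_1$, so $JW = J_oW$). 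Together with $JN$ this gives three tangent directions; the remaining two must lie in $\mathcal D_2$, orthogonal to $N$ and $JN$. Using Equation~\eqref{eq:norm-G-squared} with $\alpha = 1$, a unit vector $U \in \mathcal D_2 \cap \ker\eta$ together with $JU$ and $G(U,N)$, suitably chosen, should complete the frame; one checks via Proposition~\ref{prop:G-properties} and \eqref{eq:norm-G-squared} that $\mathcal F = \{JN, W, JW, U, JU\}$ (or a similar list) is orthonormal and that each element lies in a definite factor $\mathcal D_i$.

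Next I would compute the action of $J$ and $J_o$ on each frame element, which is what feeds the curvature tensor~\eqref{eq:curvature-cp3}. The key structural facts are: $J = J_o$ on $\mathcal D_1$ and $J = -J_o$ on $\mathcal D_2$, so $J J_o$ acts as $+\id$ on $\mathcal D_1$ and $-\id$ on $\mathcal D_2$ — a projection-type operator. Consequently the terms in~\eqref{eq:curvature-cp3} involving $J_o$ and $J J_o$ simplify dramatically once we know which factor each vector belongs to. With these expressions in hand, I would evaluate the Tsinghua identity~\eqref{lucequation}, $\cycl{X,Y,W}\, g(\tilde R(X,Y)Z, SW) = 0$, on the triples of frame vectors listed in the (to-be-constructed) analogue of Table~\ref{tableCP3D1}. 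As in the $N \in \mathcal D_1$ case, I expect the off-diagonal relations to force the shape operator to be diagonal with respect to $\mathcal F$, and the first few relations to force $h_{22} = h_{33} = h_{44} = h_{55}$, i.e.\ all eigenvalues on $\ker\eta$ coincide while the $\xi$-eigenvalue $h_{11}$ may differ. This is exactly $\eta$-quasi-umbilicity: $h = h_{22}\, g + (h_{11} - h_{22})\, \eta \otimes \eta$.

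The main obstacle will be organizing the curvature computation cleanly, since with $N \in \mathcal D_2$ the normal direction interacts with both factors: $N$ and $JN$ sit in $\mathcal D_2$, while $G(\cdot,N)$ can mix the factors depending on the inputs (by the properties in Proposition~\ref{prop:G-properties} and the fact that $G$ interchanges the distributions in a controlled way). One must be careful to choose $U$ (and hence $G(U,N)$) so that the frame vectors are genuine eigenvectors of the operator $JJ_o$, otherwise the curvature expressions do not collapse and the system~\eqref{lucequation} becomes unwieldy. A secondary subtlety is that the metric is not the standard Kähler metric — it is rescaled by $\tfrac12$ on $\mathcal D_2$ — so one must consistently use the nearly Kähler metric $g$ (under which $\mathcal F$ is orthonormal) rather than $g_o$; the curvature formula~\eqref{eq:curvature-cp3} is already written in terms of $g$, so this is a matter of bookkeeping rather than a genuine difficulty. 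Once the frame is correctly chosen so that $J J_o$ is diagonal on it, the remaining steps are a routine (if lengthy) evaluation of~\eqref{lucequation}, entirely parallel to the $N \in \mathcal D_1$ proof.
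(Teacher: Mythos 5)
Your proposal follows essentially the same route as the paper: it builds an orthonormal frame adapted to the splitting $\mathcal D_1 \oplus \mathcal D_2$ (the paper takes $\{JN, V, JV, G(V,N), JG(V,N)\}$ with $V$ a unit section of $\mathcal D_1$, so that $G(V,N)$ and $JG(V,N)$ supply the two remaining $\mathcal D_2$-directions you leave unspecified), determines the action of $J$ and $J_o$ on the frame, and evaluates~\eqref{lucequation} to force $S$ diagonal with $h_{22}=h_{33}=h_{44}=h_{55}$, exactly as in Table~\ref{tableCP3D2}. The only slip is the sign of $JJ_o$, which is $-\id$ on $\mathcal D_1$ and $+\id$ on $\mathcal D_2$ rather than the reverse; this does not affect the structure of the argument.
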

\begin{proof}
Let $X$ be a unit vector field in $\mathcal D_1$.
Then a direct computation using~\eqref{eq:norm-G-squared} with $\alpha = 1$, and the fact that $J$ is compatible with $g$ shows that 
$\mathcal F = \{JN, V, JV, G(V,N), JG(V,N)\}$
is an orthonormal frame on $M$.
Note that $\mathcal D_1 = \mathrm{span}\{V,JV\}$ and $\mathcal D_2 = \mathrm{span}\{N,JN,G(V,N),JG(V,N)\}$.

To compute Equation~\eqref{lucequation}, we need to express the curvature $\tilde R$ as well as the shape operator with respect to $\mathcal F$.
To this end, we determine how $J$ and $J_o$ act on the elements of $\mathcal F$, allowing us to calculate $\tilde R$ given by~\eqref{eq:curvature-cp3}.
Then we take $X$, $Y$, $Z$ and $W$ as indicated in Table~\ref{tableCP3D2} (see Remark~\ref{remark-hor-lines}) and the result of the computation is listed in the right column.
From these calculations it follows that the shape operator with respect to $\mathcal F$ is diagonal.
Moreover, from the first three equations we deduce that $h_{22}=h_{33}=h_{44}=h_{55}$.
\end{proof}

\begin{lemma}
    Let $M$ be a hypersurface of $\C P^3 $ with constant sectional curvature. 
    Let $N$ be its unit normal and suppose that $N=V_1+V_2$, where $V_i\in\mathcal D_i\smallsetminus\{0\}$.
    Then $M$ is totally umbilical.
\end{lemma}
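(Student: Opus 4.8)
The plan is to repeat, in this mixed situation, the frame-and-curvature argument of the two preceding lemmas. Write $a=\|V_1\|$ and $b=\|V_2\|$, so that $a^2+b^2=1$ with $a,b$ nowhere zero, and set $\xi=JN=JV_1+JV_2$. Since $\mathcal D_1$ is a two-dimensional $J$-invariant distribution, it is spanned by $V_1$ and $JV_1$; inside the plane $\mathrm{span}\{V_1,V_2\}$, which contains $N$, let $W$ be the unit vector proportional to $b^2V_1-a^2V_2$, so that $W\perp N$, and observe that $JW$ then also lies in $\mathcal D_1\oplus\mathcal D_2$ and is tangent to $M$. The orthogonal complement of $\mathrm{span}\{N,JN,W,JW\}=\mathrm{span}\{V_1,JV_1,V_2,JV_2\}$ inside $T\C P^3$ is a $J$-invariant plane contained in $\mathcal D_2$; choosing a unit vector $U$ in it, the set $\mathcal F=\{JN,W,JW,U,JU\}$ is an orthonormal frame on $M$, where orthonormality and tangency follow from $\mathcal D_1\perp\mathcal D_2$, the compatibility of $J$ with $g$, and Proposition~\ref{prop:G-properties}.

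Next I would work out how $J$, $J_o$ and $JJ_o$ act on $\mathcal F$; this is routine, using $J=J_o$ on $\mathcal D_1$ and $J=-J_o$ on $\mathcal D_2$. On $\mathrm{span}\{U,JU\}$ one gets $J_o=-J$ and $JJ_o=\id$, while on $\mathrm{span}\{JN,W,JW\}$ both $J_o$ and $JJ_o$ act by rotation-type formulas with coefficients $a^2-b^2$ and $\pm 2ab$, modulo terms along the normal $N$; these normal parts disappear from the scalars $g(J_oY,Z)$ appearing in \eqref{eq:curvature-cp3}, and only their tangential projections enter the subsequent computation. Feeding this into \eqref{eq:curvature-cp3} and evaluating the cyclic identity \eqref{lucequation} on appropriate quadruples of frame vectors — organised as in Remark~\ref{remark-hor-lines} — I would first obtain the vanishing of all off-diagonal components $h_{ij}$ ($i\neq j$), and then the relations $h_{22}=h_{33}=h_{44}=h_{55}$, exactly as in the lemmas for $N\in\mathcal D_1$ and $N\in\mathcal D_2$.

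The new feature of the present case is that $ab\neq0$ makes $J_o$ and $JJ_o$ genuinely couple the three distinguished directions $JN$, $W$ and $U$: evaluating \eqref{lucequation} on a quadruple straddling the $\{JN,W,JW\}$-block and the $\{U,JU\}$-block brings $h_{11}$ into play and yields $h_{11}=h_{22}$. Hence $S$ is a scalar multiple of the identity, so $M$ is totally umbilical. The only genuine difficulty I anticipate is bookkeeping: keeping track of the normal contributions that arise when $J_o$ and $JJ_o$ (and, where convenient, $G$) are applied to frame vectors, and picking the few quadruples that together force both the off-diagonal vanishing and all five diagonal entries to coincide. Since \eqref{lucequation} is linear in $S$ and the rotation here mixes every distinguished direction, no exceptional direction can survive — which is precisely why the conclusion is full umbilicity rather than the weaker $\eta$-quasi-umbilicity obtained in the two previous lemmas.
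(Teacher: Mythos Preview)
Your outline is correct and follows essentially the same strategy as the paper: build an adapted frame on $M$, determine how $J_o$ and $JJ_o$ act on it, and evaluate the cyclic identity \eqref{lucequation} on enough quadruples of frame vectors to force $S$ to be a scalar multiple of the identity. The only difference is the frame itself: the paper uses the merely orthogonal frame $\{JV_1,JV_2,V_3,G(V_1,V_2),JG(V_1,V_2)\}$ with $V_3=\cos^2\theta\,N-V_1$ --- your $W$ is proportional to this $V_3$, your plane $\mathrm{span}\{U,JU\}$ is exactly $\mathrm{span}\{G(V_1,V_2),JG(V_1,V_2)\}$, and keeping $JV_1\in\mathcal D_1$ and $JV_2\in\mathcal D_2$ separate (rather than mixing them into $JN$ and $JW$) makes the action of $J_o$ on those two vectors simply $\pm J$, which streamlines the bookkeeping you anticipate.
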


\begin{proof}
    Note that from the properties of nearly Kähler manifolds it follows that $\mathcal{D}_2=\mathfrak{D}_1\oplus\mathfrak{D}_2$, where $\mathfrak{D}_1=\mathrm{span}\{V_2,JV_2\}$ and $\mathfrak{D}_2=\mathrm{span}\{G(V_1,V_2),JG(V_1,V_2)\}$.
    To form an orthogonal frame on $M$, we define the vector field $V_3=G(G(V_1,V_2),N)$. 
    By~\eqref{eq:propg5} we get $V_3=\cos^2\theta N-V_1$, where $\cos^2\theta=g(V_1,N)$, with $\theta\in(0,\tfrac{\pi}{2})$.
    Note that the vector field $V_3$ is orthogonal to $N$, hence tangent to $M$.
    In addition, we see that $V_3$ is orthogonal to $JV_1$, $JV_2$, $G(V_1,V_2)$ and $JG(V_1,V_2)$, which are all vector fields tangent to $M$.
    Thus we obtain an orthogonal frame $\mathcal F = \{JV_1,JV_2,V_3,G(V_1,V_2),JG(V_1,V_2)\}$ on $M$ where $g(JV_1,JV_1)=\cos^2\theta$, $g(JV_2,JV_2)=\sin^2\theta$, and $g(V_3,V_3)=g(G(V_1,V_2),G(V_1,V_2))=g(JG(V_1,V_2),JG(V_1,V_2))=\cos^2\theta\sin^2\theta$.
    It is a straightforward computation to determine how $J$ and $J_o$ behave with respect to~$\mathcal F$. 
 
    We proceed by computing Equation~\eqref{lucequation}, using the curvature~\eqref{eq:curvature-cp3} and plugging in the vector fields $X$, $Y$, $Z$ and $W$ as indicated in Table~\ref{tableCP3D1D2} (see Remark~\ref{remark-hor-lines}).
    The result of the computation is listed in the right column.
    From these equations we deduce that the shape operator is diagonal with respect to $\mathcal F$.
    Moreover, the last three equations imply that $h_{22}\cos^2\theta=h_{33}=h_{44}=h_{55}$ and hence $M$ is totally umbilical with $S=h_{11}\sec^2\theta \id$.
\end{proof}

\subsection*{Hypersurfaces of \texorpdfstring{$\boldsymbol{F(\C^3)}$}{F(C3)}}
Recall that the tangent bundle of $F(\C^3)$ splits as $TF(\C^3)=\mathcal{D}_1\oplus \mathcal{D}_2 \oplus \mathcal{D}_3$ (see Section~\ref{sec:flag-introduction}).
We divide in three cases: $N$ lies in one distribution (which we may assume to be $\mathcal{D}_1$), $N$ has non-zero components in two of the distributions (which we may assume to be $\mathcal{D}_1$ and $\mathcal{D}_2$), and $N$ has non-zero components in all three distributions.

\begin{lemma}
    Let $M$ be a hypersurface of $F(\C^3)$ with constant sectional curvature. 
    Let $N$ be its unit normal and suppose that $N\in\mathcal{D}_1$. 
    Then $M$ is $\eta$-quasi-umbilical where $\eta$ is the dual 1-form of~$JN$.
\end{lemma}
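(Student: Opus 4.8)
The plan is to follow the same strategy as in the three $\C P^3$ cases treated above. Since $\mathcal{D}_1$ is a rank-two, $J$-invariant distribution and $N\in\mathcal{D}_1$, we have $\mathcal{D}_1=\mathrm{span}\{N,JN\}$. I would then choose a local unit vector field $V\in\mathcal{D}_2$. Using the structural properties of $G=\tilde\nabla J$ on $F(\C^3)$ -- namely that $G$ vanishes on each $\mathcal{D}_i\times\mathcal{D}_i$ and maps $\mathcal{D}_i\times\mathcal{D}_j$ into $\mathcal{D}_k$ whenever $\{i,j,k\}=\{1,2,3\}$ -- together with \eqref{eq:norm-G-squared} for $\alpha=1$ and the $g$-compatibility of $J$, one checks that $\mathcal{F}=\{JN,V,JV,G(V,N),JG(V,N)\}$ is an orthonormal frame on $M$, with $\mathcal{D}_2=\mathrm{span}\{V,JV\}$ and $\mathcal{D}_3=\mathrm{span}\{G(V,N),JG(V,N)\}$; in particular $G(V,N)$ has unit length because $V$ and $JV$ are orthogonal to $N$.

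Next I would record how $J$ and $J_1,J_2,J_3$ act on $\mathcal{F}$. Each $J_i$ equals $J$ on $\mathcal{D}_i$ and $-J$ on the other two distributions, so once the membership of every frame vector in $\mathcal{D}_1$, $\mathcal{D}_2$ or $\mathcal{D}_3$ is known, these actions -- and hence every term of the curvature tensor \eqref{eq:flag-curvaturetensor} -- can be written out explicitly in terms of $\mathcal{F}$ and the normal $N$. Then I would compute the cyclic Gauss identity \eqref{lucequation}, $\cycl{X,Y,W}\ g(\tilde R(X,Y)Z,SW)=0$, using \eqref{eq:flag-curvaturetensor} and the quintuples $X,Y,Z,W\in\mathcal{F}$ listed in Table~\ref{table1FC3D1}. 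Writing $h_{ij}=g(SE_i,E_j)$ for the components of $S$ with respect to $\mathcal{F}=\{E_1,\dots,E_5\}$ with $E_1=JN$, these relations should first force $h_{ij}=0$ for $i\neq j$, and then, using the equations below the dotted line, yield $h_{22}=h_{33}=h_{44}=h_{55}$. Setting $\xi=JN$ and $\eta=\xi\contr g$, this says $S=h_{22}\,\id+(h_{11}-h_{22})\,\eta\otimes\xi$, equivalently $h=h_{22}\,g+(h_{11}-h_{22})\,\eta\otimes\eta$, so $M$ is $\eta$-quasi-umbilical.

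The difficulty is purely computational: one must check that the chosen quintuples from $\mathcal{F}$ make the cyclic curvature identity produce enough independent scalar equations to kill every off-diagonal $h_{ij}$ and to identify the four ``horizontal'' eigenvalues. Two features keep this manageable. First, because $N$ lies in $\mathcal{D}_1$, the distributions $\mathcal{D}_2$ and $\mathcal{D}_3$ enter symmetrically, so many equations come in matched pairs and the bookkeeping roughly halves. Second, compared with the $\C P^3$ computation, the three additional curvature blocks built from $J_1,J_2,J_3$ contribute uniformly, since each $J_i$ acts on every vector of $\mathcal{F}$ as $\pm J$. This uniformity is precisely what confines $S$ to the $\eta$-quasi-umbilical form rather than forcing total umbilicity, which is the expected outcome: here there is no extra vector field coupling $JN$ to the horizontal directions -- unlike the $\Ss^3\times\Ss^3$ case with $PN$ independent from $N,JN$, or the $\C P^3$ case $N=V_1+V_2$, both of which produced totally umbilical hypersurfaces.
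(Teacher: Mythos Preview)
Your proposal is correct and follows essentially the same approach as the paper: the paper picks unit vector fields $U\in\mathcal D_2$ and $V\in\mathcal D_3$ to form the frame $\{JN,U,JU,V,JV\}$, whereas you build the $\mathcal D_3$-part as $G(V,N),\,JG(V,N)$ from your chosen $V\in\mathcal D_2$; since $G(\mathcal D_1,\mathcal D_2)\subset\mathcal D_3$ and $\|G(V,N)\|=1$, the two frames coincide up to relabeling. One cosmetic slip: in the paper's Table~\ref{tableFC3D1} it is the \emph{first three} equations that give $h_{22}=h_{33}=h_{44}=h_{55}$, while the equations below the dotted line finish off the remaining off-diagonal entries $h_{25},h_{34},h_{24},h_{35}$ --- the reverse of what you wrote.
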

\begin{proof}
    Let $U$ and $V$ be unit vector fields in $\mathcal{D}_2$ and $\mathcal{D}_3$, respectively.
    Then $\mathcal F = \{JN,U,JU,V,JV\}$ is a orthonormal frame on $M$. 
    Moreover, it is easy to see how $J$, $J_1$, $J_2$ and $J_3$ behave with respect to~$\mathcal F$.

    We proceed by computing Equation~\eqref{lucequation}, using the curvature~\eqref{eq:flag-curvaturetensor} and plugging in the vector fields $X$, $Y$, $Z$ and $W$ as indicated in Table~\ref{tableFC3D1} (see Remark~\ref{remark-hor-lines}).
    The result of the computation is listed in the right column.
    From these computations it follows that the shape operator is diagonal with respect to $\mathcal F$.
    Moreover, from the first three equations we deduce that $h_{22}=h_{33}=h_{44}=h_{55}$.
\end{proof}

\begin{lemma}
    Let $M$ be a hypersurface of $F(\C^3)$ with constant sectional curvature. 
    Let $N$ be its unit normal and suppose that $N$ has non-zero components in $\mathcal D_1$ and $\mathcal D_2$, but no components in $\mathcal D_3$.
    Then $M$ is $\eta$-quasi-umbilical where $\eta$ is the dual 1-form of~$JN$.
\end{lemma}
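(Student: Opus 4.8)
The plan is to follow the same template as in the previous cases (the lemmas for $N\in\mathcal D_1$ and the three $\C P^3$ cases), namely: construct an explicit orthonormal frame $\mathcal F$ on $M$ adapted to the splitting $TF(\C^3)=\mathcal D_1\oplus\mathcal D_2\oplus\mathcal D_3$, determine how $J,J_1,J_2,J_3$ act on the frame, plug this into the curvature formula~\eqref{eq:flag-curvaturetensor}, and then evaluate the Tsinghua-type identity~\eqref{lucequation} on suitable triples from $\mathcal F$ to pin down the shape operator. First I would set up the frame. Writing the unit normal as $N=\cos\theta\, N_1+\sin\theta\, N_2$ with $N_i\in\mathcal D_i$ unit and $\theta\in(0,\tfrac\pi2)$ (the hypothesis that both components are nonzero), the natural candidates are $JN$ (tangent, unit), a vector built from $G(N_1,N_2)$, and their $J$-images. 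Since $G$ maps $\mathcal D_i\times\mathcal D_j$ into $\mathcal D_k$ for $\{i,j,k\}=\{1,2,3\}$, the vector $W:=G(N_1,N_2)$ lies in $\mathcal D_3$; using~\eqref{eq:norm-G-squared} with $\alpha=1$ one computes $\|W\|^2=\cos^2\theta\sin^2\theta$ (the $J$-terms drop out because $N_1,N_2$ lie in different $J$-eigenspace-type factors). One then checks, exactly as in the $\C P^3$ case $N=V_1+V_2$, that $V_3:=G(W,N)$ is tangent to $M$ (orthogonal to $N$) and, by~\eqref{eq:propg5}, equals a combination of $N_1$ and $N_2$; concretely $V_3$ is the unit-normalizable vector in $\mathrm{span}\{N_1,N_2\}\cap T_pM$. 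This gives an orthogonal frame $\mathcal F=\{JN, JV_3\text{-type vector}, V_3, W, JW\}$ (or the analogous five vectors), and I would record the norms $\|V_3\|^2=\|W\|^2=\cos^2\theta\sin^2\theta$ and so on, exactly paralleling the displayed computation in the $N=V_1+V_2$ lemma for $\C P^3$.

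Next I would compute the action of $J$ and of $J_1,J_2,J_3$ on each frame vector. Here $J_i$ acts as $J$ on $\mathcal D_i$ and as $-J$ on the complementary distributions, so on a vector like $W\in\mathcal D_3$ we have $J_3 W=JW$ and $J_1 W=J_2 W=-JW$, while on $V_3\in\mathrm{span}\{N_1,N_2\}$ one must first decompose into the $\mathcal D_1$ and $\mathcal D_2$ parts and apply the corresponding signs. On $JN$, since $N$ itself is a mix of $\mathcal D_1$ and $\mathcal D_2$ and $J$ preserves each $\mathcal D_i$, the vector $JN$ is again a mix of $\mathcal D_1$ and $\mathcal D_2$, and $J_i(JN)$ is computed componentwise. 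All of this is a bookkeeping exercise in $\cos\theta,\sin\theta$. With these in hand, $\tilde R(X,Y)Z$ from~\eqref{eq:flag-curvaturetensor} becomes an explicit combination of frame vectors with coefficients that are trigonometric polynomials in $\theta$, and~\eqref{lucequation} — a cyclic sum over $X,Y,W$ of $g(\tilde R(X,Y)Z,SW)=0$ — turns into linear equations in the entries $h_{ij}=g(SE_i,E_j)$.

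Then I would organize the computation in a table (a new Table analogous to Table~\ref{tableFC3D1}), listing the chosen quadruples $X,Y,Z,W\in\mathcal F$ in the left column and the resulting equation in the right column, using the convention of Remark~\ref{remark-hor-lines} that later equations may invoke earlier ones. From the off-diagonal equations one concludes $S$ is diagonal with respect to $\mathcal F$; from the "first three equations" one extracts a chain of equalities among the diagonal entries corresponding to the four directions in $\ker\eta$ — something like $h_{22}\cos^2\theta\sin^2\theta$ (or the appropriately weighted version reflecting the norms of the frame vectors) equal to $h_{33}=h_{44}=h_{55}$, with the $\xi=-JN$-direction entry $h_{11}$ left free. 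This is precisely the statement that $h=\alpha g+\beta\,\eta\otimes\eta$ for suitable functions $\alpha,\beta$, i.e.\ $M$ is $\eta$-quasi-umbilical.

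**The main obstacle** I anticipate is the frame construction and the sign bookkeeping for $J_1,J_2,J_3$ acting on $V_3$ and on $JN$: unlike the $\C P^3$ case where there is a single "other" distribution, here there are three factors and a vector with components in two of them does not sit in a single $J_i$-eigenspace, so the curvature expression has more terms and the resulting linear system is larger and messier. The risk is purely computational — that some cross term fails to cancel and the system forces $S$ to be more rigid (totally umbilical) or, conversely, leaves an extra degree of freedom — but given the pattern of all the earlier cases (a two-distribution normal always yields $\eta$-quasi-umbilical, a generic or single-distribution normal in the "balanced" spaces yields totally umbilical) I expect the $\eta$-quasi-umbilical conclusion to come out cleanly once the frame and the $J_i$-actions are correctly tabulated. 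No genuinely new idea beyond Equation~\eqref{lucequation} and the algebraic identities~\eqref{eq:norm-G-squared}, \eqref{eq:propg5}, \eqref{eq:flag-curvaturetensor} should be needed.
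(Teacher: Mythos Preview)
Your approach is essentially the paper's: build an adapted orthonormal frame on $M$, determine how $J,J_1,J_2,J_3$ act on it, feed this into the curvature~\eqref{eq:flag-curvaturetensor}, and evaluate~\eqref{lucequation} on suitable quadruples to obtain a linear system forcing $S$ to be diagonal with $h_{22}=h_{33}=h_{44}=h_{55}$ (see Table~\ref{tableFC3D1D2}). The paper's frame is simpler than the one you propose: instead of invoking $G$, it picks any unit $V\in\mathcal D_3$ and sets $U=\tfrac{1}{\sin\theta}(JJ_1N-\cos\theta\,N)$ with $\cos\theta=g(JJ_1N,N)$, yielding the \emph{orthonormal} frame $\{JN,U,JU,V,JV\}$ directly---your $V_3$ coincides with $\pm U$ and your $W,JW$ span $\mathcal D_3$ just as $V,JV$ do---which sidesteps the norm and sign bookkeeping you flag as the main obstacle (incidentally, with $N_1,N_2$ unit one has $\|G(N_1,N_2)\|=1$, not $\cos\theta\sin\theta$).
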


\begin{proof}
    Let $V$ be a unit vector field in $\mathcal{D}_3$.
    Then $\mathcal F = \{JN,U,JU,V,JV\}$ is a orthonormal frame on $M$, where $U=\tfrac{1}{\sin\theta}(JJ_1N-\cos\theta N)$ and $\cos \theta=g(JJ_1N,N)$.
    Note that $\theta$ cannot be zero or $\pi$, as otherwise, by the Cauchy-Schwartz inequality, $N$ lies in only one distribution.
    Moreover, it is easy to see how $J$, $J_1$, $J_2$ and $J_3$ behave with respect to~$\mathcal F$.

    We proceed by computing Equation~\eqref{lucequation}, using the curvature~\eqref{eq:flag-curvaturetensor} and plugging in the vector fields $X$, $Y$, $Z$ and $W$ as indicated in Table~\ref{tableFC3D1D2} (see Remark~\ref{remark-hor-lines}).
    The result of the computation is listed in the right column.
    From these computations it follows that the shape operator is diagonal with respect to $\mathcal F$.
    Moreover, from the first three equations we deduce that $h_{22}=h_{33}=h_{44}=h_{55}$.
\end{proof}

\begin{lemma}
    Let $M$ be a hypersurface of $F(\C^3)$ with constant sectional curvature. 
    Let $N$ be its unit normal and suppose that  $N$ has non-zero components in $\mathcal{D}_1$, $\mathcal{D}_2$ and $\mathcal{D}_3$.
    Then $M$ is totally umbilical.
\end{lemma}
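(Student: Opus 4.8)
The plan is to reuse the scheme of the two previous $F(\C^3)$ lemmas: construct an explicit orthogonal frame on $M$ adapted to the splitting $TF(\C^3)=\mathcal D_1\oplus\mathcal D_2\oplus\mathcal D_3$, compute the action of $J$ and of $J_1,J_2,J_3$ on this frame so as to evaluate the curvature~\eqref{eq:flag-curvaturetensor}, and then read the components $h_{ij}$ of the shape operator off the Tsinghua identity~\eqref{lucequation}.

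Concretely, I would write $N=V_1+V_2+V_3$ with $V_i\in\mathcal D_i\smallsetminus\{0\}$ and put $a_i^2=g(V_i,V_i)$, so that $a_1^2+a_2^2+a_3^2=1$ and, by hypothesis, $a_1a_2a_3\neq0$. Each $\mathcal D_i$ is $J$-invariant and $g(JV_i,N)=g(JV_i,V_i)=0$, so $JV_1,JV_2,JV_3$ are mutually orthogonal, tangent to $M$, with $\|JV_i\|=a_i$. To fill out $TM=N^\perp$ I would add two vectors spanning $\{\,\sum_i c_iV_i:\sum_i c_ia_i^2=0\,\}$; by~\eqref{eq:propg5} one has $G(G(V_i,V_j),N)=a_i^2V_j-a_j^2V_i$, so two independent choices among the three pairs $(i,j)$, orthogonalized by Gram--Schmidt, give vectors $W_1,W_2$ which, using Proposition~\ref{prop:G-properties} and~\eqref{eq:norm-G-squared}, are checked to be orthogonal to one another, to all $JV_i$ and to $N$. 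Thus $\mathcal F=\{JV_1,JV_2,JV_3,W_1,W_2\}$ is an orthogonal frame on $M$, with known non-unit norms, and it is precisely here that the hypothesis $a_1a_2a_3\neq0$ is used to guarantee that the five vectors are independent --- exactly as the corresponding nondegeneracy hypotheses are used in the $\Ss^3\times\Ss^3$ and $\C P^3$ lemmas.

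Next, using $J_i=J$ on $\mathcal D_i$ and $J_i=-J$ on $\mathcal D_j\oplus\mathcal D_k$, together with the fact that $G$ sends $\mathcal D_i\times\mathcal D_j$ into $\mathcal D_k$ for $\{i,j,k\}=\{1,2,3\}$, I would write out $J,J_1,J_2,J_3$ on the elements of $\mathcal F$ as explicit linear combinations of the $JV_i$ and of the $V_i$-combinations, substitute into~\eqref{eq:flag-curvaturetensor}, and then feed a suitable list of quadruples $(X,Y,Z,W)\in\mathcal F$ into~\eqref{lucequation}, organized as in Table~\ref{tableFC3D1D2D3} (recall Remark~\ref{remark-hor-lines}): the first batch of equations forces every off-diagonal $h_{ij}$ to vanish, and the remaining ones, once the frame norms are restored, force all principal curvatures to coincide, so $h=\alpha g$ and $M$ is totally umbilical. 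I expect the main difficulty to be purely computational: since $N$ now meets all three distributions, none of $J_1,J_2,J_3$ acts block-diagonally on $\mathcal F$, making the matrix of $\tilde R(\cdot,\cdot)\cdot$ and the cyclic sum substantially more involved than in the earlier cases; the quadruples in~\eqref{lucequation} must be chosen in the right order both to isolate each $h_{ij}$ and to exclude the $\eta$-quasi-umbilical-but-not-totally-umbilical behaviour that genuinely occurs when $N$ lies in fewer distributions.
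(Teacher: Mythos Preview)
Your approach is essentially that of the paper: the authors parametrize $N=\cos\theta_1\,V_1+\sin\theta_1(\sin\theta_2\,V_2+\cos\theta_2\,V_3)$ with \emph{unit} $V_i\in\mathcal D_i$, take the orthonormal frame $\{JV_1,JV_2,JV_3,U_1,U_2\}$ where $U_1=\cos\theta_2 V_2-\sin\theta_2 V_3$ and $U_2=\sin\theta_1 V_1-\cos\theta_1(\sin\theta_2 V_2+\cos\theta_2 V_3)$ are precisely the two $V_i$-combinations orthogonal to $N$ that you construct abstractly, and then run~\eqref{lucequation} through the quadruples of Table~\ref{table1FC3D1D2D3}. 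One small deviation from your anticipated flow: in the actual computation one off-diagonal component, $h_{23}$, does not vanish in the first pass but remains coupled to the diagonal entries and is only forced to zero simultaneously with $h_{11}=\dots=h_{55}$ in the final five-equation linear system.
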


\begin{proof}
    Suppose that $N=\cos\theta_1V_1+\sin\theta_1(\sin\theta_2V_2+\cos\theta_2V_3)$ where $V_1$, $V_2$ and $V_3$ are unit vectors fields in $\mathcal{D}_1$, $\mathcal{D}_2$ and $\mathcal{D}_3$, respectively.
    Then $\mathcal F = \{JV_1,JV_2,JV_3,U_1,U_2\}$ is a orthonormal frame on $M$, where $U_1=\cos\theta_2V_2-\sin\theta_2 V_3$ and $U_3=\sin\theta _1 V_1 -\cos\theta _1 \left(\sin\theta _2V_2 +\cos\theta _2 V_3 \right)$.
    Note that $\theta_1$ and $\theta_2$ cannot be zero, $\pi/2$, $3\pi/2$, nor $\pi$, since otherwise $N$ lies in one distribution, or the sum of two distributions.
    Moreover, it is easy to see how $J$, $J_1$, $J_2$ and $J_3$ behave with respect to~$\mathcal F$.

    We proceed by computing Equation~\eqref{lucequation}, using the curvature~\eqref{eq:flag-curvaturetensor} and plugging in the vector fields $X$, $Y$, $Z$ and $W$ as indicated in Table~\ref{table1FC3D1D2D3} (see Remark~\ref{remark-hor-lines}).
    The result of the computation is listed in the right column.
    The last five equations comprise a linear system with solution $h_{23} = 0$ and $h_{11}=h_{22}=h_{33}=h_{44}=h_{55}$.
    Combined with the other equations, we find that the shape operator is diagonal with respect to $\mathcal F$ and hence, $M$ is totally umbilical.
\end{proof}

\clearpage
\begin{table}[p]
    \ra{1.2} 
    \[
    \begin{NiceArray}{llllcl}
        \toprule
        X & Y & Z & W & \text{\phantom{ab}} & \cycl{X,Y,W}\ g( \tilde{R}(X,Y)Z,SW ) = 0  \\    
        \midrule
        JN & V & G(V,N) & JV && h_{15}=0 \\
        JN & V & JG(V,N) & JV &&h_{14}=0\\
        G(V,N) & JG(V,N) &JN & V && h_{44}-h_{55}=0\\
        G(V,N) & JG(V,N) &JN & JV && h_{45}=0\\[3pt]
        \hdottedline\\[-13pt]
        G(V,N) & JG(V,N) &JV & JN && h_{12}=0\\
        JN &JG(V,N)& G(V,N) & JV &&h_{23}=0\\
        JN & V & V & G(V,N) && h_{25}+2 \theta_1h_{34}=0\\
        JN & V & V & JG(V,N) && h_{24}+2 \theta_1h_{35}=0\\
        JN & JV & V & G(V,N) && h_{35}-2 \theta_2h_{34}=0\\
        JN & G(V,N) & V & JG(V,N) && h_{13}=0\\
        V &  G(V,N) & JV & JG(V,N) && h_{22}-h_{44}=0\\[3pt]
        \hdottedline\\[-13pt]
        JN &  JG(V,N)  & G(V,N) & V && h_{22}-h_{11}(1-\theta_1^2-\theta_2^2)=0\\
        JN &  JG(V,N)  & JG(V,N) & JV && h_{33}-h_{11}(1-\theta_1^2-\theta_2^2)=0\\
        JN & G(V,N) & JG(V,N) & JG(V,N) && h_{34}(2\theta_1+1)=0\\
        JN & V & JV & JG(V,N) &&h_{34} \theta_1(2\theta_1-1)=0\\    
        \bottomrule
    \end{NiceArray}
    \]
    \caption{Equation~\eqref{lucequation} in $\Ss^3\times\Ss^3$, when $PN$ is linearly independent from $N$ and $JN$.}
    \label{tableS3S3independent}
\end{table}

\begin{table}[p]
\ra{1.2}
\[
\begin{NiceArray}{llllcl}
    \toprule
    X & Y & Z & W & \text{\phantom{ab}} & \cycl{X,Y,W}\ g( \tilde{R}(X,Y)Z,SW ) = 0  \\    
    \midrule
    V_1 & V_2 & JV_1 & JV_2 && 2h_{22}+3h_{33}-5h_{55}=0\\
    V_1 & V_2 & JV_2 & JV_1 && 3h_{22}+2h_{33}-5h_{44}=0\\
    V_1 & JV_1 & V_2 & JV_2 && 5h_{22}-3h_{44}-2h_{55}=0\\  
    JN & JV_2 & JV_1 & V_2 && h_{12}=0\\
    JN & V_1 & JV_2 & JV_1 && h_{13}=0\\
    JN & JV_2 & V_1 & V_2 && h_{14}=0\\
    JN & V_1 & V_2 & JV_1 && h_{15}=0\\
    JV_1 & V_1 & V_1 & JV_2 && 5h_{23}- h_{45}=0\\
    JV_1 & V_2 & V_2 & V_1 && 4h_{24}+h_{35}=0\\
    JV_1 & V_1 & V_1 & V_2 && h_{25}-3 h_{34}=0\\[3pt]
    \hdottedline\\[-13pt]
    V_1 & V_2 & JV_1 & JV_1 && h_{23}=h_{45}=0\\
    V_2 & JV_2 & JV_1 & JV_1 && h_{24}=h_{35}=0\\
    V_1 & JV_2 & JV_1 & JV_1 && h_{25}=h_{34}=0\\  
    \bottomrule
\end{NiceArray}
\]
\caption{Equation~\eqref{lucequation} in $\Ss^3\times\Ss^3$, when $PN$ is a linear combination of $N$ and $JN$.}
\label{tableS3S3casedependent}
\end{table}

\begin{table}[p]
\ra{1.2}
\[
\begin{NiceArray}{llllcl}
    \toprule
    X & Y & Z & W & \text{\phantom{ab}} & \cycl{X,Y,W}\ g( \tilde{R}(X,Y)Z,SW ) = 0 \\
    \midrule
    V & JV & G(V,N) & JG(V,N) && h_{22}+h_{33}-2h_{55}=0\\
    V  & G(V,N) & JV & JG(V,N) && 2h_{22}-h_{44}-h_{55}=0\\
    V & JV & JG(V,N) & G(V,N) && h_{22}+h_{33}-2h_{44}=0\\
    JN & V & JV & JV && h_{12}=0\\
    JN & V & V & JV && h_{13}=0\\
    JN & V & V & G(V,N) && h_{14}=0\\
    JN & V & V & JG(V,N) && h_{15}=0\\
    V & JG(V,N) & V & G(V,N) && h_{23}=0\\
    JV & V & G(V,N) & G(V,N) && h_{45}=0\\
    V & JV &JV& JG(V,N) && 3h_{25}-h_{34}=0\\
    V & JV &V& JG(V,N) && h_{24}+3h_{35}=0\\[3pt]
    \hdottedline\\[-13pt]
    V & JV &V& G(V,N) && h_{25}=h_{34}=0\\
    V & JG(V,N)&JG(V,N)& G(V,N) && h_{24}=h_{35}=0\\
    \bottomrule
\end{NiceArray}
\]
\caption{Equation~\eqref{lucequation} in $\C P^3$, when $N \in \mathcal D_1$.}
\label{tableCP3D1}
\end{table}

\begin{table}[p]
\ra{1.2}
\[
\begin{NiceArray}{llllcl}
    \toprule
    X & Y & Z & W & \text{\phantom{ab}} & \cycl{X,Y,W}\ g( \tilde{R}(X,Y)Z,SW ) = 0 \\
    \midrule
    V & JV & G(V,N) & JG(V,N) && h_{22}+h_{33}-2h_{55}=0\\
    V  & G(V,N) & JV & JG(V,N) && 2h_{22}-h_{44}-h_{55}=0\\
    V & JV & JG(V,N) & G(V,N) && h_{22}+h_{33}-2h_{44}=0\\
    JN & V & JV & JV && h_{12}=0\\
    JN & V & V & JV && h_{13}=0\\
    JN & V & JV & JG(V,N) && h_{14}=0\\
    JN & V & G(V,N) & JV && h_{15}=0\\
    V & JG(V,N) & V & G(V,N) && h_{23}=0\\
    JV & V & G(V,N) & G(V,N) && h_{45}=0\\
    V & JV &JV& JG(V,N) && 7h_{25}+3h_{34}=0\\
    V & JV &V& JG(V,N) && 3h_{24}-7h_{35}=0\\[3pt]
    \hdottedline\\[-13pt]
    V & JV &V& G(V,N) && h_{25}=h_{34}=0\\
    V & JG(V,N)&JG(V,N)& G(V,N) && h_{24}=h_{35}=0\\
    \bottomrule
\end{NiceArray}\]
\caption{Equation~\eqref{lucequation} in $\C P^3$, when $N \in \mathcal D_2$.}
\label{tableCP3D2}
\end{table}

\begin{table}[p]
\ra{1.2}
\[
\begin{NiceArray}{llllcl}
    \toprule
     X & Y & Z & W & \text{\phantom{ab}} & \cycl{X,Y,W}\ g( \tilde{R}(X,Y)Z,SW ) = 0 \\
    \midrule
    JV_1 & JV_2 & G(V_1,V_2) & G(V_1,V_2) && h_{12}=0\\
    JV_1 &  G(V_1,V_2) & JV_1 & JG(V_1,V_2) && h_{13}=0\\
    JV_1 & JV_2 & JV_2 & G(V_1,V_2) && h_{14}=0\\
    JV_1 & JV_2 & JV_2 & JG(V_1,V_2) && h_{15}=0\\[3pt]
    \hdottedline\\[-13pt]
    JV_1 & JV_2 & JV_1 & V_3 && h_{23}=0\\
    JV_1 & JV_2 & V_3 & JG(V_1,V_2) && h_{24}=0\\
    JV_1 & JV_2 & V_3 & G(V_1,V_2) && h_{25}=0\\
    JV_1 & G(V_1,V_2) & JV_1 & V_3 && h_{34}=0\\
    JV_1 & G(V_1,V_2) & G(V_1,V_2) & V_3 && h_{45}=0\\
    JV_1 & JV_2 & V_3 & V_3 && h_{11}\sin^2\theta-h_{22}\cos^2\theta=0\\[3pt]
    \hdottedline\\[-13pt]
    JV_1 & G(V_1,V_2) & V_3 & V_3 && h_{35}=0\\
    V_3 & JG(V_1,V_2) & JV_1 & G(V_1,V_2) && -2h_{33}+h_{44}+h_{55}=0\\
    JV_2 & G(V_1,V_2) & JG(V_1,V_2) & V_3 && h_{22}\cos^2\theta +h_{33}-2h_{44}=0\\
    JV_2 & V_3 & G(V_1,V_2) & JG(V_1,V_2) && h_{22} \cos^2\theta+h_{33}-2h_{55}=0\\
    \bottomrule
\end{NiceArray}
\]
\caption{Equation~\eqref{lucequation} in $\C P^3$, when $N = V_1+V_2$ where $V_i \in \mathcal D_i \smallsetminus\{0\}$.}
\label{tableCP3D1D2}
\end{table}

\begin{table}[p]
\ra{1.2}
\[
\begin{NiceArray}{llllcl}
    \toprule
    X & Y & Z & W & \text{\phantom{ab}} & \cycl{X,Y,W}\ g( \tilde{R}(X,Y)Z,SW ) = 0\\
    \midrule
    U & JU & V & JV && h_{22}+h_{33}-2h_{55}=0\\
    U & JU & JV & V && h_{22}+h_{33}-2h_{44}=0\\
    U & V & JU & JV && 2h_{22}-h_{44}-h_{55}=0\\
    JN & U & JU & JU && h_{12}=0 \\
    JN & U & U & JU && h_{13}=0\\
    JN & U & JU & JV && h_{14}=0\\
    JN & U & JU & V && h_{15}=0\\
    U & V & U & JV && h_{23}=0\\
    U & JU & V & V && h_{45}=0\\
    U & JU & U & V && h_{25}+5h_{34}=0\\
    U & JU & U & JV && h_{24}-5h_{35}=0\\[3pt]
    \hdottedline\\[-13pt]
    U & JU & JU & JV && h_{25}=h_{34}=0\\
    U & JU & JU & V && h_{24}=h_{35}=0\\
    \bottomrule
\end{NiceArray}
\]
\caption{Equation~\eqref{lucequation} in $F(\C^3)$, when $N \in \mathcal D_1$.}
\label{tableFC3D1}
\end{table}

\begin{table}[p]
    \ra{1.2}
    \[
    \begin{NiceArray}{llllcl}
        \toprule
        X & Y & Z & W & \text{\phantom{ab}} & \cycl{X,Y,W}\ g( \tilde{R}(X,Y)Z,SW ) = 0 \\
        \midrule
        U & JU & V & JV && h_{22}+h_{33}-2h_{55}=0\\
        U & JU & JV & V && h_{22}+h_{33}-2h_{44}=0\\
        U & JV & JU & V && 2h_{22}-h_{44}-h_{55}=0\\
        JN & JU & V & JV && h_{12}=0 \\
        JN & U & V & JV && h_{13}=0\\
        JN & JU & U & JV && h_{14}=0\\
        JN & U & V & JU && h_{15}=0\\
        U & V & U & JV && h_{23}=0\\
        JN & U & JN & V && h_{24}=0\\
        JN & U & JN & JV && h_{25}=0\\
        U & JV & JV & JU && h_{45}=0\\[3pt]
    \hdottedline\\[-13pt]
        U & JU & JU & JV && h_{34}=0\\
        U & JU & JU & V && h_{35}=0\\
        \bottomrule
    \end{NiceArray}
    \]
    \caption{Equation~\eqref{lucequation} in $F(\C^3)$, when $N \in \mathcal D_1 \oplus \mathcal D_2$.}
    \label{tableFC3D1D2}
    \end{table}

\begin{table}[p]
    \ra{1.2}
    \[
    \begin{NiceArray}{llllcl}
        \toprule
        X & Y & Z & W & \text{\phantom{ab}} & \cycl{X,Y,W}\ g( \tilde{R}(X,Y)Z,SW ) = 0\\
        \midrule
        JV_1 & JV_2 & JV_1 & U_1 && h_{15}=0 \\[3pt]
    \hdottedline\\[-13pt]
        JV_1 & JV_2 & JV_1 & U_2 && h_{14}\cos\theta_2 + 5 h_{25} \sin\theta_1=0 \\[3pt]
    \hdottedline\\[-13pt]
        JV_1 & JV_2 & JV_3 & U_1&& h_{14}=h_{25}=0\\[3pt]
    \hdottedline\\[-13pt]
        JV_1 & JV_2 & U_1 & JV_3 && h_{35}=0\\
        JV_1 & JV_2 & JV_2 & U_2 && h_{24}=0\\[3pt]
    \hdottedline\\[-13pt]
        JV_1 & U_2 & U_1 & U_1 && h_{34}=0\\
        JV_1 & U_1 & U_1 & JV_3 &&3 h_{12} \cos \theta_2+5 h_{13} \sin \theta_2 +h_{45}\sin\theta_1 = 0\\
        JV_1 & JV_2 & U_1 & U_1 && 5 h_{12} \cos\theta_2 + h_{13} \sin\theta_2 - h_{45} \sin\theta_1 =0\\[3pt]
    \hdottedline\\[-13pt]
        JV_1& U_1 & JV_1 & U_2 && h_{12}=h_{13}=h_{45}=0\\[3pt]
    \hdottedline\\[-13pt]
        JV_1& JV_2 & U_1 & U_2 && h_{11}- 2h_{22}+ h_{55}+ 2 h_{23} \tan\theta_2 = 0 \\
        JV_1& JV_2 & U_2 & U_1 && -2h_{11} + h_{22} + h_{44} - h_{23} \tan\theta_2 = 0 \\
        JV_1& U_1 & JV_2 & U_2 && h_{11}-2h_{44}+h_{55} = 0\\
        JV_2 & U_2 & JV_2 & U_1 && 5 h_{44}  - 5 h_{55} + 2 h_{23} \csc 2\theta_2 = 0 \\
        JV_1 & JV_3 & U_1 & U_2 && h_{11} - 2 h_{33}  + h_{55}  + 2h_{23}\cot\theta_2 = 0 \\
        \bottomrule
    \end{NiceArray}
    \]
    \caption{Equation~\eqref{lucequation} in $F(\C^3)$, when $N \in \mathcal D_1 \oplus D_2 \oplus D_3$.}
    \label{table1FC3D1D2D3}
    \end{table}

\clearpage

\bibliographystyle{abbrv}
\bibliography{allpapers}

\end{document}